\newcommand{\C}{\mathbb{C}}
\newcommand{\F}{\mathbb{F}}
\newtheorem{thm}{Theorem}[section]
\newtheorem{lem}[thm]{Lemma}
\newtheorem{cor}[thm]{Corollary}
\theoremstyle{definition}
\newtheorem{ex}{Example}
\newtheorem{defn}{Definition}
\newtheorem*{rem}{Remark}
\begin{document}
\title{Polynomials over structured grids}
\subjclass[2010]{Primary: 05E40. Secondary: 11T06, 12D10.}

\date{\today}
\author{Bogdan Nica}
\begin{abstract} 
We study multivariate polynomials over `structured' grids. We begin by proposing an interpretation as to what it means for a finite subset of a field to be structured; we do so by means of a numerical parameter, the nullity. We then  extend several results--notably, the Combinatorial Nullstellensatz and the Coefficient Theorem--to polynomials over structured grids. The main point is that the structure of a grid allows the degree constraints on polynomials to be relaxed.
\end{abstract}

\address{\newline Department of Mathematical Sciences \newline Indiana University--Purdue University Indianapolis}
\maketitle

\section{Introduction}
Given a polynomial $f\in F[X_1, \dots, X_n]$ and a finite grid $A_1\times\dots\times A_n\subseteq F^n$, where $F$ is a field, some natural questions arise:
\begin{itemize}
\item[$\bullet$] can $f$ vanish at all the grid points, or maybe at all but one of the grid points?
\item[$\bullet$] what can be said about the number of zeroes of $f$ in the grid?
\item[$\bullet$] can $f$ be recovered from its values over the grid?
\end{itemize}
Besides their intrinsic algebraic interest, such questions can have striking applications in number theory, combinatorics, and graph theory. The polynomial method is by now an established, though occasionally elusive, technique in these subjects (cf. \cite{Tao}). 

A celebrated result concerning multivariate polynomials over finite grids is the Combinatorial Nullstellensatz. It was crystallized by Noga Alon \cite{Al}; however, premonitions of this result can be detected in earlier works by Alon and collaborators. Incidentally, \cite{Al} also offers an excellent glimpse into the power of the polynomial method. 

\begin{thm}[Combinatorial Nullstellensatz] \label{Thm: CN} Let $A_1,\dots,A_n$ be finite subsets of a field $F$. Assume that a polynomial $f\in F[X_1, \dots, X_n]$ contains a monomial $X_1^{k_1}\dots X_n^{k_n}$ with non-zero coefficient, such that $k_1<|A_1|,\dots, k_n<|A_n|$ and
\[\deg(f)=k_1+\dots+k_n .\] 
Then $f(a)\neq 0$ for some grid point $a\in A_1\times\dots\times A_n$.
\end{thm}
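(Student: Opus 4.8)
The plan is to argue by contradiction: assume that $f$ vanishes at every point of the grid $A_1\times\dots\times A_n$, and derive a contradiction with the hypothesis on the distinguished monomial. For each $i$ set $g_i(X_i)=\prod_{a\in A_i}(X_i-a)$, a monic polynomial of degree $|A_i|$ that vanishes on $A_i$. The first step is a reduction procedure: whenever some monomial of $f$ involves $X_i$ to a power $\ge|A_i|$, rewrite $X_i^{|A_i|}$ using the identity $X_i^{|A_i|}=X_i^{|A_i|}-g_i(X_i)$, whose right-hand side has $X_i$-degree $<|A_i|$. Iterating this over all variables produces a polynomial $\tilde f$ with $\deg_{X_i}\tilde f<|A_i|$ for every $i$. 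Two features of the procedure matter: it never increases the total degree, since each substitution replaces a monomial by a combination of monomials of strictly smaller degree; and, because each $g_i$ vanishes on $A_i$, the polynomial $\tilde f$ takes the same values as $f$ at every grid point, so $\tilde f$ also vanishes identically on $A_1\times\dots\times A_n$.

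Next I would track the distinguished monomial $X_1^{k_1}\dots X_n^{k_n}$ through the reduction. Because $k_i<|A_i|$ for all $i$, this monomial is already reduced and is never altered by the procedure; because $k_1+\dots+k_n=\deg(f)$, it lies in the top-degree homogeneous component of $f$, and the degree-lowering substitutions cannot create any new monomial of degree $\deg(f)$. Hence the coefficient of $X_1^{k_1}\dots X_n^{k_n}$ in $\tilde f$ equals its coefficient in $f$, which is nonzero; in particular $\tilde f$ is not the zero polynomial.

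It remains to invoke the elementary fact that a nonzero polynomial $h\in F[X_1,\dots,X_n]$ with $\deg_{X_i}h<|A_i|$ for all $i$ cannot vanish on all of $A_1\times\dots\times A_n$. I would prove this by induction on $n$: writing $h=\sum_j h_j(X_2,\dots,X_n)\,X_1^{\,j}$ with $j<|A_1|$ and fixing $(a_2,\dots,a_n)\in A_2\times\dots\times A_n$, the resulting univariate polynomial in $X_1$ has degree $<|A_1|$ yet vanishes on $A_1$, hence is identically zero; thus $h_j(a_2,\dots,a_n)=0$ for every $j$. Choosing $j$ with $h_j\ne 0$ and applying the inductive hypothesis to $h_j$ yields a point of $A_2\times\dots\times A_n$ where $h_j$ does not vanish, a contradiction. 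Applying this with $h=\tilde f$ contradicts the vanishing of $\tilde f$ on the grid, and the theorem follows.

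The step I expect to be the crux is the bookkeeping in the second paragraph: one must be certain that the reduction does not inadvertently change the coefficient of the distinguished monomial. This is exactly where the two hypotheses are used — $k_i<|A_i|$ guarantees that the monomial is stable under the reduction, while $\deg(f)=k_1+\dots+k_n$ places it in the top-degree part, which the degree-lowering substitutions cannot reach.
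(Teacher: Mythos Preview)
Your argument is correct and is essentially Alon's original proof of the Combinatorial Nullstellensatz. The paper, however, does not give a standalone proof of this statement; it recovers it as the case $\lambda=0$ of the structured version, Theorem~\ref{Thm: GCN}. That proof proceeds differently: assuming $f$ vanishes on the grid, it invokes as a black box Alon's \emph{algebraic} Nullstellensatz to write $f=\sum_i h_i\,\Pi_{A_i}(X_i)$ with $\deg(h_i)\le\deg(f)-|A_i|$, and then locates the monomial $X_1^{k_1}\cdots X_n^{k_n}$ in one of the products $h_i\,\Pi_{A_i}(X_i)$ to reach a degree contradiction. Your reduction-and-tracking route is more self-contained---it effectively proves the decomposition and the conclusion in one pass, and the final inductive lemma you prove is exactly the ``reduced polynomials are determined by their grid values'' fact that underlies the algebraic Nullstellensatz. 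The paper's packaging, by isolating the decomposition $f=\sum_i h_i\,\Pi_{A_i}(X_i)$, is what makes the extension to $\lambda$-null grids transparent: once the decomposition is in hand, the lacunary shape of $\Pi_{A_i}$ can be exploited directly, whereas in your reduction argument the gain from nullity is less visible.
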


By the degree of a multivariate polynomial we always mean its total degree. 

Algebraic aspects of the broader theme, polynomials over finite grids, have been investigated in many recent papers \cite{BS, BCPS, Cl, GM-P, L, Sch}. We may also refer to \cite{Kou, Mic} for alternate approaches to the Combinatorial Nullstellensatz.

In this paper we take a closer look at the tension between a polynomial and the grid it is evaluated on. Typical results, such as the Combinatorial Nullstellensatz, are formulated over an arbitrary grid; the degree restrictions on the polynomial reflect this freedom. Our starting idea is that we expect more rigidity over a `structured' grid, and this should translate into weaker degree restrictions placed upon the polynomial. We give meaning to the informal idea of a `structured' grid by means of a certain parameter--the nullity. We interpret increased nullity as increased structure: an arbitrary grid has the lowest possible nullity, whereas grids with genuine arithmetical structure have high nullity. We then find that, the higher the nullity of a grid, the more relaxed is the degree constraint for the corresponding polynomial.

The following three results illustrate our perspective. They are the notable corollaries of a more general statement, the structured Combinatorial Nullstellensatz (Theorem~\ref{Thm: GCN}). 

\begin{thm}[Zero-sum grids]\label{thmZ} Let $A_1,\dots,A_n$ be zero-sum subsets of a field $F$. Assume that a polynomial $f\in F[X_1, \dots, X_n]$ contains a monomial $X_1^{k_1}\dots X_n^{k_n}$ with non-zero coefficient, such that $k_1<|A_1|,\dots, k_n<|A_n|$ and 
\[\deg(f)\leq k_1+\dots+k_n +1.\] 
Then $f(a)\neq 0$ for some grid point $a\in A_1\times\dots\times A_n$.
\end{thm}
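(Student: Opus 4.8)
The plan is to run the standard argument behind the Combinatorial Nullstellensatz (Theorem~\ref{Thm: CN}) and to cash in, at one point, the extra room afforded by the zero-sum hypothesis. I would argue by contradiction: suppose $f$ vanishes at every point of $A_1\times\cdots\times A_n$, and for each $i$ set $g_i:=\prod_{a\in A_i}(X_i-a)\in F[X_i]$, a monic polynomial of degree $|A_i|$. Reducing $f$ modulo $g_1,\dots,g_n$ produces $f=\sum_{i=1}^n h_ig_i+\tilde f$ with $\deg(h_ig_i)\le\deg f$ (the reduction never raises the total degree, each $g_i$ being monic with leading term $X_i^{|A_i|}$) and $\deg_{X_i}\tilde f<|A_i|$ for every $i$. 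Since $\tilde f$ agrees with $f$ on the grid and has degree $<|A_i|$ in each variable, it is the zero polynomial; hence $f=\sum_{i=1}^n h_ig_i$ with $\deg h_i\le\deg f-|A_i|$.

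The one new ingredient is the shape of the factors $g_i$. Writing out the product, $g_i=X_i^{|A_i|}-\bigl(\sum_{a\in A_i}a\bigr)X_i^{|A_i|-1}+\cdots$, and since $A_i$ is zero-sum the subleading coefficient is $0$: so $g_i=X_i^{|A_i|}+r_i$ with $\deg r_i\le|A_i|-2$. Now I compare, on the two sides of $f=\sum_i h_ig_i$, the coefficient of the monomial $X_1^{k_1}\cdots X_n^{k_n}$; write $K:=k_1+\cdots+k_n$, so this monomial has degree $K$ while $\deg f\le K+1$. On the left the coefficient is non-zero by hypothesis, so it is enough to check that $X_1^{k_1}\cdots X_n^{k_n}$ appears in no summand $h_ig_i=h_iX_i^{|A_i|}+h_ir_i$. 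Every monomial of $h_iX_i^{|A_i|}$ has $X_i$-degree $\ge|A_i|>k_i$, so none is the one we want; and a monomial of $h_ir_i$ of total degree $K$ is a product of a monomial of $h_i$ and a monomial of $r_i$, so its $h_i$-factor has degree $\ge K-\deg r_i\ge K-|A_i|+2$ --- impossible, since $\deg h_i\le\deg f-|A_i|\le K-|A_i|+1$. This contradiction proves the theorem.

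I expect the degree bookkeeping in the last paragraph to be the only real point: the missing $X_i^{|A_i|-1}$ term in $g_i$ is precisely the one unit of slack that upgrades the hypothesis $\deg f=K$ of Theorem~\ref{Thm: CN} to $\deg f\le K+1$ here. (One could also avoid the contradiction setup: the same count shows the monomial $X_1^{k_1}\cdots X_n^{k_n}$ survives, with its coefficient, in the reduced polynomial $\tilde f$, so $\tilde f\ne 0$, and applying Theorem~\ref{Thm: CN} to $\tilde f$ --- with the monomial $X_1^{k_1}\cdots X_n^{k_n}$ if $\deg\tilde f=K$, or with any top-degree monomial of $\tilde f$ if $\deg\tilde f=K+1$, using that $\tilde f$ has all $X_i$-degrees $<|A_i|$ --- yields a grid point where $\tilde f$, hence $f$, is non-zero.) More generally, forcing the top $\delta$ subleading coefficients of every $g_i$ to vanish ought to buy $\delta$ units of slack, presumably the mechanism behind the structured Combinatorial Nullstellensatz (Theorem~\ref{Thm: GCN}).
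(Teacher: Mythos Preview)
Your argument is correct and follows essentially the same route as the paper: the paper derives Theorem~\ref{thmZ} from the structured Combinatorial Nullstellensatz (Theorem~\ref{Thm: GCN}) with $\lambda=1$, whose proof invokes Alon's decomposition $f=\sum_i h_i\,\Pi_{A_i}(X_i)$ with $\deg h_i\le\deg f-|A_i|$ and then runs the identical degree count you give, using that the $X_i^{|A_i|-1}$ term of $\Pi_{A_i}$ is absent. The only difference is that you reprove Alon's decomposition via explicit reduction rather than citing it, and your closing remark about buying $\delta$ units of slack is exactly the content of Theorem~\ref{Thm: GCN}.
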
 

\begin{thm}[Multiplicative grids]\label{thmM} Let $A_1,\dots,A_n$ be subsets of a field $F$, each of which is a coset of a finite multiplicative subgroup. Assume that a polynomial $f\in F[X_1, \dots, X_n]$ contains a monomial $X_1^{k_1}\dots X_n^{k_n}$ with non-zero coefficient, such that $k_1<|A_1|,\dots, k_n<|A_n|$ and 
\[\deg(f)\leq k_1+\dots+k_n +\min\{|A_1|,\dots, |A_n|\}-1.\] 
Then $f(a)\neq 0$ for some grid point $a\in A_1\times\dots\times A_n$.
\end{thm}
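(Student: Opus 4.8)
The plan is to pass from $f$ to its reduction over the grid and then invoke the classical Combinatorial Nullstellensatz (Theorem~\ref{Thm: CN}); the slack of $\min_i|A_i|-1$ in the degree hypothesis will be absorbed by the fact that the vanishing polynomial of a multiplicative coset is a \emph{binomial}. Concretely, write $A_i=c_iH_i$ with $H_i\leq F^\times$ of order $m_i:=|A_i|$. By Lagrange every $x\in H_i$ satisfies $x^{m_i}=1$, and $X^{m_i}-1$ has at most $m_i$ roots, so $H_i$ is exactly the zero set of $X^{m_i}-1$; consequently $A_i=c_iH_i$ is exactly the zero set of
\[
g_i(X):=X^{m_i}-c_i^{m_i}=\prod_{a\in A_i}(X-a).
\]
Apart from the leading term $X^{m_i}$ this polynomial has only a constant term, and this is the structural feature of the grid (its nullity, in the language of Theorem~\ref{Thm: GCN}) that the argument exploits.

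Set $m:=\min\{m_1,\dots,m_n\}$. Since $f$ contains the monomial $X_1^{k_1}\cdots X_n^{k_n}$ we have $k_1+\dots+k_n\leq\deg f\leq k_1+\dots+k_n+m-1$. Now run the standard reduction of $f$ modulo $g_1,\dots,g_n$: repeatedly locate a monomial $\mu$ of the current polynomial with $\deg_{X_i}\mu\geq m_i$ for some $i$ and replace the $\mu$-term by a scalar multiple of $\mu/X_i^{m_i}$ (which is the subtraction of a multiple of $g_i$, using $g_i=X_i^{m_i}-c_i^{m_i}$). This process terminates by the usual multivariate division argument — exactly as in the proof of Theorem~\ref{Thm: CN} — in a polynomial $\tilde f$ with $\deg_{X_i}\tilde f<m_i$ for all $i$ and $f-\tilde f\in\langle g_1,\dots,g_n\rangle$; in particular $f$ and $\tilde f$ take the same values on $A_1\times\dots\times A_n$.

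The heart of the argument is to follow the coefficient of $X_1^{k_1}\cdots X_n^{k_n}$ through this reduction. A single step deletes a monomial $\mu$ and introduces one of degree $\deg\mu-m_i$; since no step raises the total degree of the working polynomial, one has $\deg\mu\leq\deg f$ throughout, so every newly created monomial has degree at most $\deg f-m_i\leq\deg f-m\leq k_1+\dots+k_n-1$. Because $k_i<m_i$ for every $i$, the monomial $X_1^{k_1}\cdots X_n^{k_n}$ is never itself chosen for reduction, and its degree $k_1+\dots+k_n$ strictly exceeds that of every monomial the process ever produces; hence its coefficient is never altered. Thus $X_1^{k_1}\cdots X_n^{k_n}$ occurs in $\tilde f$ with the same nonzero coefficient it had in $f$, and in particular $\tilde f\neq 0$.

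To conclude, pick a monomial $X_1^{l_1}\cdots X_n^{l_n}$ of maximal degree occurring in $\tilde f$; then $l_i<m_i=|A_i|$ for all $i$ and $\deg\tilde f=l_1+\dots+l_n$, so Theorem~\ref{Thm: CN} applied to $\tilde f$ yields a grid point $a\in A_1\times\dots\times A_n$ with $\tilde f(a)\neq 0$, and therefore $f(a)=\tilde f(a)\neq 0$. I expect the only genuinely delicate point to be the degree bookkeeping in the third paragraph: one must check that the reduction never increases the total degree (so that $\deg\mu\leq\deg f$ at every stage), and it is precisely the uniform drop by $m_i\geq m$ at each step that swallows the extra $m-1$ degrees granted in the hypothesis. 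Everything else is the classical Combinatorial Nullstellensatz machinery.
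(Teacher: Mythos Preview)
Your argument is correct. The degree bookkeeping in the third paragraph is exactly right: since each reduction step replaces one monomial by a single monomial of strictly smaller total degree, the invariant $\deg\mu\le\deg f$ is maintained throughout, and hence every newly created monomial has degree at most $\deg f-m_i\le k_1+\dots+k_n-1$, which is strictly below $k_1+\dots+k_n$. Together with the fact that $X_1^{k_1}\cdots X_n^{k_n}$ is itself never selected for reduction (as $k_i<m_i$), this shows its coefficient survives unchanged into~$\tilde f$.

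Your route, however, differs from the paper's. The paper deduces Theorem~\ref{thmM} from the general structured Nullstellensatz (Theorem~\ref{Thm: GCN}), whose proof argues by contradiction via Alon's \emph{algebraic} Nullstellensatz \cite[Thm.~1.1]{Al}: assuming $f$ vanishes on the grid, one writes $f=\sum_i h_i\,\Pi_{A_i}(X_i)$ with $\deg h_i\le\deg f-|A_i|$, and then the lacunary shape of $\Pi_{A_i}$ forces the monomial $X_1^{k_1}\cdots X_n^{k_n}$ to arise in some $h_i\,\Pi_{A_i}(X_i)$ only through a low-order term of $\Pi_{A_i}$, contradicting the degree bound on $h_i$. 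You instead carry out the grid reduction explicitly, track the distinguished coefficient through it, and finish with the classical Theorem~\ref{Thm: CN}; this is closer in spirit to Micha\l{}ek's short proof \cite{Mic} and avoids invoking \cite[Thm.~1.1]{Al} as a black box. It is worth noting that your argument does not actually require $g_i$ to be a binomial: if each $A_i$ is merely $\lambda$-null, a single reduction step still drops the total degree by at least $\lambda+1$, so the same bookkeeping proves Theorem~\ref{Thm: GCN} in full generality.
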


\begin{thm}[Additive grids]\label{thmA} Let $F$ be a field of characteristic $p$. Let $A_1,\dots,A_n$ be subsets of $F$, each of which is a coset of a finite additive subgroup. Assume that a polynomial $f\in F[X_1, \dots, X_n]$ contains a monomial $X_1^{k_1}\dots X_n^{k_n}$ with non-zero coefficient, such that $k_1<|A_1|,\dots, k_n<|A_n|$ and 
\[\deg(f)\leq k_1+\dots+k_n +(1-p^{-1})\min\{|A_1|,\dots, |A_n|\}-1.\] 
Then $f(a)\neq 0$ for some grid point $a\in A_1\times\dots\times A_n$.
\end{thm}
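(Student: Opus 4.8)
The plan is to obtain the result from the structured Combinatorial Nullstellensatz (Theorem~\ref{Thm: GCN}) by showing that a coset of a finite additive subgroup of $F$ has nullity at least $(1-p^{-1})\,|A|-1$; equivalently, one can argue directly, specializing the proof of Theorem~\ref{Thm: CN}. I sketch the direct version, as it isolates the single new ingredient. Everything hinges on the shape of the vanishing polynomial $g_A(X):=\prod_{a\in A}(X-a)$ when $A=b+V$ with $V\le(F,+)$ a finite subgroup. Because $\mathrm{char}\,F=p$, the group $V$ is an $\F_p$-vector space, so $|V|=p^m=:q$ for some $m\ge 0$, and I claim that
\[g_A(X)=X^{q}-r_A(X)\quad\text{with}\quad \deg r_A\le q/p .\]

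The ingredient behind this is the classical fact that $L_V(X):=\prod_{v\in V}(X-v)$ is a \emph{linearized polynomial}, i.e.\ $L_V(X)=\sum_{i=0}^{m}c_iX^{p^i}$ with $c_m=1$. One checks this by induction on $m$: the case $m=0$ is $L_{\{0\}}(X)=X$, and for the inductive step one picks $u\in V$ outside a codimension-one subspace $W$, so that $V=\bigsqcup_{c\in\F_p}(cu+W)$ and hence $L_V(X)=\prod_{c\in\F_p}L_W(X-cu)$; since $L_W$ is $\F_p$-linear by induction we have $L_W(X-cu)=L_W(X)-cL_W(u)$, and therefore
\[L_V(X)=\prod_{c\in\F_p}\bigl(L_W(X)-cL_W(u)\bigr)=L_W(X)^{p}-L_W(u)^{p-1}L_W(X),\]
again a $p$-polynomial, monic of degree $q$. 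Now $L_V$ is additive, so $g_A(X)=L_V(X-b)=L_V(X)-L_V(b)$, which indeed has the form $X^{q}-r_A(X)$ with $r_A$ supported in degrees $\le p^{m-1}=q/p$. Thus below the leading term $X^{q}$ the exponents of $g_A$ drop all the way down to $q/p$; in the language of the paper, $A$ has nullity at least $q-1-q/p=(1-p^{-1})q-1$, and a coset inherits this from the subgroup since translation by $b$ alters only the constant term.

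With the claim in hand, the argument closes exactly as for Theorem~\ref{Thm: CN}. Suppose, for contradiction, that $f$ vanishes at every point of $A_1\times\dots\times A_n$. By the standard division lemma underlying the proof of Theorem~\ref{Thm: CN}, we may write $f=\sum_{i=1}^n h_ig_{A_i}$ with $\deg h_i\le\deg f-|A_i|$. Put $q_i:=|A_i|$ and write $g_{A_i}(X_i)=X_i^{q_i}-r_i(X_i)$ with $\deg r_i\le q_i/p$, so that $h_ig_{A_i}=h_iX_i^{q_i}-h_ir_i$, and consider the coefficient of $X_1^{k_1}\dots X_n^{k_n}$ in $f=\sum_i h_ig_{A_i}$. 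No monomial of $h_iX_i^{q_i}$ has $X_i$-exponent below $q_i>k_i$, so that summand contributes nothing; on the other hand
\[\deg(h_ir_i)\le(\deg f-q_i)+q_i/p=\deg f-(1-p^{-1})q_i\le\deg f-(1-p^{-1})\min_j q_j ,\]
and by the hypothesis $\deg f\le k_1+\dots+k_n+(1-p^{-1})\min_j q_j-1$ the right-hand side is at most $(k_1+\dots+k_n)-1$, which is strictly below the degree $k_1+\dots+k_n$ of $X_1^{k_1}\dots X_n^{k_n}$; hence $X_1^{k_1}\dots X_n^{k_n}$ does not occur in $h_ir_i$ either. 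So the coefficient of $X_1^{k_1}\dots X_n^{k_n}$ in $f$ is $0$, contradicting the hypothesis that it is non-zero.

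The step that carries the idea---and the only place I expect to need any real thought---is the structural identity $g_A=X^{q}-r_A$ with $\deg r_A\le q/p$: the appearance of the linearized polynomial $L_V$ and the wide gap $q-1-q/p$ it forces below the leading term. Once that is in place, the remainder is the same degree bookkeeping as in Theorem~\ref{Thm: CN}. Two routine points to verify: the statement is vacuous when some $|A_i|=1$ (then the bound permitted for $\deg f$ lies strictly below $k_1+\dots+k_n$), and in every other case $q_i\ge p$, so $q_i/p$ is an integer and the degree estimates never degenerate. If one prefers to go through Theorem~\ref{Thm: GCN}, the nullity computation above is essentially the whole of the task.
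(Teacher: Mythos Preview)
Your proof is correct and follows essentially the same route as the paper: establish that a coset of a finite additive subgroup is $\lambda$-null for $\lambda=(1-p^{-1})|A|-1$ via the linearized-polynomial shape of $\Pi_A$, then apply (or, as you do, inline) the proof of the structured Combinatorial Nullstellensatz (Theorem~\ref{Thm: GCN}). The only minor difference is in how the linearized form is obtained: you use the standard induction on $\dim_{\F_p}V$, whereas the paper cites Ore and also supplies an alternative symmetric-function argument in Example~\ref{ex: ore}; the degree bookkeeping thereafter is identical.
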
 

Let us clarify that, in the latter two theorems, the subsets $A_1,\dots,A_n$ need not have the same underlying subgroup. A very minor additional hypothesis on the subsets $A_1,\dots,A_n$, left out for readability's sake, is that neither one is allowed to be a singleton.

So far, we have highlighted the Combinatorial Nullstellensatz and its structured extensions. But our study of polynomials over structured grids goes well beyond this result. Among other things, we prove and apply a structured extension of the Coefficient Theorem \cite{Sch, L, KP}, see Theorem~\ref{Thm: CCT} herein.


\section{Null subsets}
We introduce a key notion for the purposes of this paper, namely the nullity of a finite subset of a field. It will turn out that several equivalent definitions are available. We start with a particularly friendly one. Informally, this definition introduces the nullity of a finite subset as the lacunarity of an associated polynomial. Let it be agreed that, throughout the paper, subsets are always understood to be non-empty.

Let $F$ be a field, and let $A\subseteq F$ be a finite subset. The \emph{characteristic polynomial} of $A$ is the polynomial $\Pi_A\in F[X]$ given by
\begin{align*}
\Pi_A(X)=\prod_{a\in A} (X-a).
\end{align*}

\begin{defn}\label{defn: null}
Let $\lambda\in \{0,\dots, |A|\}$. We say that $A$ is \emph{$\lambda$-null} if, in the characteristic polynomial $\Pi_A(X)$, the coefficients of $X^{|A|-1}, \dots, X^{|A|-\lambda}$ vanish. 
\end{defn}

Being $0$-null is a void condition, so any finite subset satisfies it. A $1$-null set is commonly known as a zero-sum set. Clearly, the condition of being $\lambda$-null gets stronger as $\lambda$ increases. 

In the next result, we collect several observations on the calculus of null sets. The straightforward arguments are left to the reader.

\begin{lem}\label{lem: nullstab}
The following hold.
\begin{itemize}
\item[(i)] Nullity is invariant under scaling: if $A$ is $\lambda$-null and $c\in F^*$, then $cA$ is $\lambda$-null.
\item[(ii)] Nullity is invariant under adjoining or removing the zero element: $A$ is $\lambda$-null if and only if $A\cup\{0\}$ is $\lambda$-null.
\item[(iii)] Nullity is preserved by disjoint unions: if $A$ and $B$ are disjoint and $\lambda$-null, then $A\cup B$ is $\lambda$-null.
\end{itemize}
\end{lem}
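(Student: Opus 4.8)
The plan is to translate the $\lambda$-null condition into a single clean statement about the characteristic polynomial and then check that each of the three operations respects it. The reformulation I would use is: $A$ is $\lambda$-null precisely when $\Pi_A(X)-X^{|A|}$ has degree at most $|A|-\lambda-1$; equivalently, expanding $\Pi_A(X)=\sum_{k=0}^{|A|}(-1)^k e_k(A)\,X^{|A|-k}$ by Vieta's formulas, $A$ is $\lambda$-null iff $e_1(A)=\dots=e_\lambda(A)=0$, where $e_k$ denotes the $k$-th elementary symmetric function of the elements of $A$. Either description makes the three items nearly mechanical.

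For (i), I would use $\Pi_{cA}(X)=\prod_{a\in A}(X-ca)=c^{|A|}\,\Pi_A(c^{-1}X)$, so the coefficient of $X^{|A|-k}$ in $\Pi_{cA}$ equals $c^{k}$ times the corresponding coefficient of $\Pi_A$ (equivalently $e_k(cA)=c^k e_k(A)$); since $c\neq 0$, the vanishing of the top $\lambda$ subleading coefficients is preserved. For (ii), if $0\in A$ there is nothing to prove, while if $0\notin A$ then $\Pi_{A\cup\{0\}}(X)=X\,\Pi_A(X)$, so multiplication by $X$ shifts every coefficient up by one degree; hence the coefficient of $X^{(|A|+1)-k}$ in $\Pi_{A\cup\{0\}}$ equals the coefficient of $X^{|A|-k}$ in $\Pi_A$ for each $k\ge 1$, and the two $\lambda$-null conditions literally coincide.

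For (iii), disjointness gives $\Pi_{A\cup B}(X)=\Pi_A(X)\,\Pi_B(X)$. Writing $\Pi_A(X)=X^{|A|}+q_A(X)$ and $\Pi_B(X)=X^{|B|}+q_B(X)$ with $\deg q_A\le |A|-\lambda-1$ and $\deg q_B\le |B|-\lambda-1$, the product expands as $X^{|A|+|B|}+X^{|A|}q_B(X)+q_A(X)X^{|B|}+q_A(X)q_B(X)$, and each of the last three terms has degree at most $|A|+|B|-\lambda-1$. Thus $\Pi_{A\cup B}(X)-X^{|A|+|B|}$ has degree $\le |A|+|B|-\lambda-1$, which is exactly the statement that $A\cup B$ is $\lambda$-null.

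None of this presents a genuine obstacle; the only place to be mildly careful is the degree bookkeeping in (iii), specifically confirming that the cross term $q_A q_B$ does not exceed degree $|A|+|B|-\lambda-1$ — but $\deg(q_A q_B)\le |A|+|B|-2\lambda-2\le |A|+|B|-\lambda-1$ since $\lambda\ge 0$, so it is harmless. It is also worth remarking in passing that each operation keeps the nullity parameter within its legal range (e.g.\ in (iii) one has $\lambda\le\min\{|A|,|B|\}\le|A\cup B|$), so the conclusions are meaningful as stated.
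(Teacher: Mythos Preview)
Your proposal is correct. The paper itself does not supply a proof of this lemma --- it explicitly leaves ``the straightforward arguments \dots\ to the reader'' --- so there is no argument to compare against. Your reformulation of $\lambda$-nullity as the degree bound $\deg\bigl(\Pi_A(X)-X^{|A|}\bigr)\le |A|-\lambda-1$ is exactly the right device; it turns all three parts into transparent degree bookkeeping, and your treatment of each item is clean and complete (including the check that the cross term $q_Aq_B$ in (iii) causes no trouble and that $\lambda$ stays in range).
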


We interpret nullity as structure. There is a two-way correlation supporting this conceptual point. On the one hand, subsets with arithmetic structure exhibit high nullity. On the other hand, subsets with very high nullity tend to be rather constrained.

\begin{ex} 
A subset $A$ is $\lambda$-null for $\lambda=|A|$ if and only if $A=\{0\}$.
\end{ex}

\begin{ex}
Let $\F_q$ be a finite field. Then $A=\F_q$ has characteristic polynomial $\Pi_A(X)=X^q-X$, and $A=\F_q^*$ has characteristic polynomial $\Pi_A(X)=X^{q-1}-1$. Thus both $A=\F_q$ and $A=\F_q^*$ are $(q-2)$-null.
\end{ex}

\begin{ex}\label{ex: multnull}
Let $A$ be a coset of a finite multiplicative subgroup. Then $A$ is $\lambda$-null for $\lambda=|A|-1$.

Indeed, let us first assume that $A\subseteq F^*$ is a finite multiplicative subgroup. As each element of $A$ is a root of the polynomial $X^{|A|}-1$, it follows that $\Pi_A(X)=X^{|A|}-1$. Therefore $A$ is $\lambda$-null for $\lambda=|A|-1$. By scaling, this remains true if $A$ is a coset of a finite multiplicative subgroup of $F^*$.

Conversely, assume a finite subset $A$ to be $\lambda$-null for $\lambda=|A|-1$. This means that $\Pi_A(X)=X^{|A|}-c$ for some $c\in F$. The degenerate case $c=0$ corresponds to $A=\{0\}$, which is in fact $\lambda$-null for $\lambda=|A|$. Consider now the case $c\neq 0$. Since $a^{|A|}=c$ for each $a\in A$, we deduce that $A\subseteq F^*$. Furthermore, picking some $a_0\in A$, we see that $a_0^{-1}A$ is contained in $\mu_{|A|}$, the multiplicative subgroup of $F^*$ which collects the roots of unity of order $|A|$. Note that $\mu_{|A|}$ has at most $|A|$ elements. Hence, by counting, it must be that $a_0^{-1}A=\mu_{|A|}$, that is $A=a_0 \mu_{|A|}$. Therefore $A$ is a coset of a multiplicative subgroup.
\end{ex}

\begin{ex}\label{ex: ore0}
Let $F$ be a field of positive characteristic $p$. Let $A$ be a coset of a finite additive subgroup; discard the degenerate case when $A$ is a singleton, that is, a coset of the additive subgroup $\{0\}$. Then $A$ is $\lambda$-null for $\lambda=(1-p^{-1})|A|-1$.

Indeed, note first that $|A|=p^e$ for some positive integer $e$; this is due to the fact that $A$ can be viewed as an affine space over the prime subfield of $F$. The key point about the characteristic polynomial of $A$ is that it takes the form 
\[\Pi_A(X)=X^{p^e}+c_{e-1} X^{p^{e-1}}+\dots+c_1 X^p+c_0 X+c_{-1}.\]
This fact is due to Oystein Ore \cite{Ore}; see also \cite{Ar}, as well as \cite[Thm.3.57]{LN} for the finite field case.
We present another argument, which we believe to be new, in Example~\ref{ex: ore}. The form of $\Pi_A(X)$ immediately implies that $A$ is $\lambda$-null for $\lambda=p^e-(p^{e-1}+1)=(1-p^{-1})|A|-1$. 

This nullity level cannot be increased, in general, as the following example shows. Let $\F_q$ be a finite field with $q=p^{e+1}$ elements. Consider the trace map $\mathrm{Tr}: \F_q\to \F_p$, given by $\mathrm{Tr}(a)=a+a^p+\dots+a^{p^e}$. The subset $A=\{a\in \F_q: \mathrm{Tr}(a)=0\}$ is an additive subgroup of size $p^e$. It is readily seen that its characteristic polynomial is $\Pi_A(X)=X^{p^e}+X^{p^{e-1}}+\dots+X^p+X$.
\end{ex}

\begin{ex}
Let $\F_q$ be a finite field, with $q>3$. If a subset $A\subseteq \F_q$ is $\tfrac{1}{2}(q-1)$-null, then $A=\F_q$, or $A=\F_q^*$.

Indeed, consider the polynomial $f=X^{q-|A|}\Pi_A(X)$. Then $f$ is monic of degree $q$, and it is fully reducible, i.e., it has $q$ roots counted with multiplicity. By R\'edei's theorem \cite{R}, one of the following holds: (a) $f=X^q-X$, or (b) $f'=0$, or (c) $f-X^q$ has degree at least $\tfrac{1}{2}(q+1)$.

Case (c) is ruled out by the nullity hypothesis on $A$. Indeed, in the polynomial $X^{q-|A|}\Pi_A(X)$, the coefficients of $X^{q-1}, \dots, X^{q-(q-1)/2}=X^{(q+1)/2}$ vanish. In case (b), no root of $f$ is simple. As every non-zero element of $A$ is a simple root of $f=X^{q-|A|}\Pi_A(X)$, we deduce that $A=\{0\}$. In this case $f=X^q$, and $f'=0$ does hold. However, $A=\{0\}$ is merely $1$-null, whereas $\tfrac{1}{2}(q-1)>1$ by our assumption on $q$. Therefore case (b) is ruled out as well. We are left with case (a). From $X^q-X=X^{q-|A|}\Pi_A(X)$, we easily deduce that either $A=\F_q$, or $A=\F_q^*$.\end{ex}

\section{The structured Combinatorial Nullstellensatz}
The following is our first main result.

\begin{thm}\label{Thm: GCN} Let $A_1,\dots,A_n$ be $\lambda$-null finite subsets of a field $F$. Assume that a polynomial $f\in F[X_1, \dots, X_n]$ contains a monomial $X_1^{k_1}\dots X_n^{k_n}$ with non-zero coefficient, such that $k_1<|A_1|,\dots, k_n<|A_n|$ and 
\[\deg(f)\leq k_1+\dots+k_n +\lambda.\] 
Then $f(a)\neq 0$ for some grid point $a\in A_1\times\dots\times A_n$.
\end{thm}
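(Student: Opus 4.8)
The plan is to mimic the standard proof of the Combinatorial Nullstellensatz (Theorem~\ref{Thm: CN}), but to exploit the extra room afforded by $\lambda$-nullity when performing polynomial division. First I would argue by contradiction: suppose $f(a)=0$ for every $a\in A_1\times\dots\times A_n$. The idea is to reduce $f$ modulo the characteristic polynomials $\Pi_{A_1}(X_1),\dots,\Pi_{A_n}(X_n)$. Ordinarily one divides $f$ by each $\Pi_{A_i}$ to get a remainder in which $X_i$ appears to degree $<|A_i|$; the point here is that each $\Pi_{A_i}(X)=X^{|A_i|}+(\text{terms of degree}\le |A_i|-\lambda-1)$ by the $\lambda$-null hypothesis, so each reduction step $X_i^{|A_i|}\mapsto -(\text{lower terms})$ replaces a monomial by monomials of degree at least $\lambda+1$ smaller in $X_i$, hence strictly smaller total degree by at least $\lambda+1$.

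Here are the steps in order. (1) Write $f=\sum_i q_i\,\Pi_{A_i}+g$ where each variable $X_i$ occurs in $g$ to degree at most $|A_i|-1$; since $\Pi_{A_i}$ vanishes on $A_i$, $g$ also vanishes on the whole grid, and a standard argument (each such $g$ is the zero polynomial — this is the uniqueness part of the multivariate division / the fact that a nonzero polynomial with all variable-degrees bounded by $|A_i|-1$ cannot vanish on $A_1\times\dots\times A_n$) forces $g\equiv 0$. (2) Track degrees: because replacing $X_i^{|A_i|}$ via $\Pi_{A_i}$ drops the $X_i$-degree by exactly one but — crucially — drops the total degree by at least $\lambda+1$ (the $\lambda$-null condition kills the top $\lambda$ coefficients), every monomial produced during the reduction of $f$ has total degree at most $\deg(f)-(\lambda+1)$ unless it already came from a part of $f$ that needed no reduction. (3) Now examine the coefficient of the top monomial $X_1^{k_1}\dots X_n^{k_n}$. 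Since $k_i<|A_i|$, this monomial is never created by a reduction step, and since $\deg(f)=k_1+\dots+k_n$ is the maximal degree among monomials of $f$ hitting this exponent vector in each coordinate simultaneously... — more precisely, I would show that the coefficient of $X_1^{k_1}\dots X_n^{k_n}$ in $g$ equals its coefficient in $f$, because any monomial of $f$ that could contribute to it after reduction would have to have degree $>k_1+\dots+k_n+\lambda$ in order to reduce down onto it, contradicting $\deg(f)\le k_1+\dots+k_n+\lambda$. (4) Hence $g$ has a nonzero coefficient, contradicting $g\equiv 0$.

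For step (3) the bookkeeping is: a monomial $X_1^{m_1}\dots X_n^{m_n}$ of $f$, after some reductions, can only land on $X_1^{k_1}\dots X_n^{k_n}$ if $m_i\ge k_i$ for the coordinates that were not reduced and, for each reduction in coordinate $i$, the exponent drops by $1$ while the total degree drops by at least $\lambda+1$; so if $s_i$ reductions occur in coordinate $i$ we need $m_i - s_i$ contributing to $k_i$ in a way that, combined across coordinates, forces $\sum m_i \ge \sum k_i + (\lambda+1)\sum s_i$ whenever $\sum s_i\ge 1$. Since $\deg f = \sum m_i \le \sum k_i + \lambda < \sum k_i + (\lambda+1)$, we must have $\sum s_i = 0$, i.e. no reduction touches this monomial, and then $m_i = k_i$ for all $i$. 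So the coefficient is preserved verbatim.

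The main obstacle I expect is making step (3) fully rigorous: one must be careful that a reduction step in coordinate $i$ can \emph{raise} the exponents in other coordinates $j\ne i$ (the lower-order terms of $\Pi_{A_i}$ involve only $X_i$, so actually it cannot — $\Pi_{A_i}$ is univariate, which is exactly what makes this clean), and one must handle the fact that the reduction process is iterated, so a monomial may be reduced many times and in several coordinates before reaching the target. The cleanest route is to fix a monomial order refining total degree, argue that division terminates, and prove by induction on the number of reduction steps that the degree-drop bound $\lambda+1$-per-unit-exponent-drop accumulates additively; then the final coefficient comparison is immediate. An alternative, avoiding explicit division, is the interpolation/formula approach: express the all-grid-vanishing of $f$ as a linear dependence and read off the coefficient of the top monomial directly, but I expect the division argument to be the most transparent given the hypotheses already assembled.
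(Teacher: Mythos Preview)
Your argument is correct and follows essentially the same route as the paper: both proceed by contradiction, obtain the representation $f=\sum_i h_i\,\Pi_{A_i}(X_i)$ (the paper by invoking Alon's Theorem~1.1 as a black box, you by carrying out the iterated reduction and showing the remainder $g$ vanishes), and then use the gap in $\Pi_{A_i}$ forced by $\lambda$-nullity to get a degree contradiction on the monomial $X_1^{k_1}\cdots X_n^{k_n}$. One small slip to fix in your write-up: in step~(2) you say a reduction ``drops the $X_i$-degree by exactly one,'' but in fact it drops the $X_i$-degree (and hence the total degree) by at least $\lambda+1$; your bookkeeping in step~(3) already uses the correct bound, so the argument stands.
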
 

The usual Combinatorial Nullstellensatz, Theorem~\ref{Thm: CN}, corresponds to the case $\lambda=0$ of the above theorem. 

Theorems~\ref{thmZ}, ~\ref{thmM}, and ~\ref{thmA} from the Introduction are immediate applications of the above theorem. If each one of $A_1,\dots,A_n$ is a zero-sum subset, then they are jointly $1$-null. If each one of $A_1,\dots,A_n$ is a coset of a finite multiplicative subgroup then, by Example~\ref{ex: multnull}, they are jointly $\lambda$-null for $\lambda=\min\{|A_1|,\dots, |A_n|\}-1$. If each one of $A_1,\dots,A_n$ is a coset of a finite additive subgroup and the ambient field $F$ has characteristic $p$, then, by Example~\ref{ex: ore0}, they are jointly $\lambda$-null for $\lambda=(1-p^{-1})\min\{|A_1|,\dots, |A_n|\}-1$.

We give a proof of Theorem~\ref{Thm: GCN} by exploiting an algebraic result of Alon \cite[Thm.1.1]{Al} which is closely related to the Combinatorial Nullstellensatz. In fact, this algebraic result also goes under the name of Combinatorial Nullstellensatz, though it is much less used. In a subsequent section, we will also see Theorem~\ref{Thm: GCN} as a consequence of Theorem~\ref{Thm: CCT}.

\begin{proof}
Arguing by contradiction, let us assume that $f(a)= 0$ for all grid points $a\in A_1\times\dots\times A_n$. Then by \cite[Thm.1.1]{Al}, the following holds: there are polynomials $h_1,\dots,h_n\in F[X_1,\dots, X_n]$ so that
\begin{align}\label{eq: cna}
f=h_1\cdot \Pi_{A_1}(X_1)+\dots+h_n\cdot \Pi_{A_n}(X_n),
\end{align}
 and, furthermore, the total degree of each polynomial $h_i$ satisfies 
\begin{align}\label{eq: cnb}
\deg(h_i)\leq \deg(f)-\deg( \Pi_{A_i})=\deg(f)-|A_i|.
\end{align}
From \eqref{eq: cna}, we deduce that the monomial $X_1^{k_1}\dots X_n^{k_n}$ appears with a non-zero coefficient in some product $h_i\cdot \Pi_{A_i}(X_i)$. Now we use the hypothesis that $A_i$ is $\lambda$-null: $\Pi_{A_i}(X_i)$ has the lacunary form
\[\Pi_{A_i}(X_i)=X_i^{|A_i|}+\sum_{r=0}^{|A_i|-\lambda-1} c_r X_i^r.\]
In the product $h_i\cdot \Pi_{A_i}(X_i)$, the monomial $X_1^{k_1}\dots X_n^{k_n}$ cannot arise from $X_i^{|A_i|}$, the leading term of $\Pi_{A_i}(X_i)$, since $k_i<|A_i|$. Therefore a term of much lower order, $X_i^r$ for some $r<|A_i|-\lambda$, is involved. This means that the monomial $X_1^{k_1}\dots X_i^{k_i-r}\dots X_n^{k_n}$ appears with a non-zero coefficient in $h_i$. But then
\begin{align*}
\deg(h_i)&\geq k_1+\dots+(k_i-r)+\dots+k_n\\
&> k_1+\dots+k_n +\lambda-|A_i|\geq \deg(f)-|A_i|
\end{align*}
in contradiction with \eqref{eq: cnb}.
\end{proof}

As an illustration, we prove a structured version of the well-known Cauchy - Davenport inequality. Let us introduce a  natural notation: given a subset $A\subseteq F$, we put 
\[\lambda(A)=\max\{\lambda: A \textrm{ is } \lambda\textrm{-null}\}.\] 

\begin{thm}\label{thm: SCD} Let $\F_p$ be a finite field with $p$ elements, where $p$ is a prime. Let $A, B\subseteq \F_p$, and consider the sumset $A+B\subseteq \F_p$. Then either
\begin{align*}
\lambda(A+B)\geq \min\{\lambda(A),\lambda(B)\},
\end{align*}
or
\begin{align*}
|A+B|\geq |A|+|B|+\lambda(A+B).
\end{align*}
\end{thm}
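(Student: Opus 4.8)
The plan is to argue by contradiction against both alternatives simultaneously. Suppose $\lambda(A+B) < \min\{\lambda(A), \lambda(B)\}$ and $|A+B| < |A| + |B| + \lambda(A+B)$. Set $\lambda = \lambda(A+B)$, so that $A+B$ fails to be $(\lambda+1)$-null while $A$ and $B$ are both $(\lambda+1)$-null (indeed both are $\lambda(A)$- and $\lambda(B)$-null, and $\lambda+1 \le \lambda(A), \lambda(B)$). The strategy is to build a polynomial $f \in \F_p[X,Y]$ that vanishes on the grid $A \times B$ but whose top-degree behaviour contradicts the structured Combinatorial Nullstellensatz, Theorem~\ref{Thm: GCN}, applied with nullity parameter $\lambda+1$.

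The natural choice, following the classical polynomial proof of Cauchy--Davenport, is
\[
f(X,Y) = \prod_{c \in A+B} (X + Y - c),
\]
which has degree $|A+B|$ and vanishes identically on $A \times B$ since $a + b \in A+B$ for all $(a,b)$. First I would verify that $f$ contains a monomial $X^{k_1} Y^{k_2}$ with $k_1 < |A|$, $k_2 < |B|$, $k_1 + k_2 = |A+B|$, and nonzero coefficient: the coefficient of $X^{k_1} Y^{k_2}$ in the top-degree part $(X+Y)^{|A+B|}$ is the binomial coefficient $\binom{|A+B|}{k_1}$, which is nonzero in $\F_p$ provided $|A+B| < p$ and we choose $k_1, k_2$ in the admissible range—here one uses $|A+B| \le |A| + |B| + \lambda - 1 \le |A| + |B| - 1 < p$ (assuming $|A|,|B| \ge 1$, with the trivial cases handled separately) together with $|A+B| \le |A| - 1 + |B| - 1 + \text{something}$; more carefully, one needs $k_1 = |A| - 1$ and $k_2 = |A+B| - |A| + 1 < |B|$, which follows exactly from the assumed inequality $|A+B| < |A| + |B| + \lambda$ once $\lambda \ge 0$, and $\binom{|A+B|}{|A|-1} \not\equiv 0 \pmod p$ by Lucas or simply because $|A+B| < p$. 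Then $\deg(f) = |A+B| < |A| + |B| + \lambda = k_1 + k_2 + (\lambda + 1)$, so $\deg(f) \le k_1 + k_2 + (\lambda+1)$. Theorem~\ref{Thm: GCN}, with all grids $(\lambda+1)$-null, would then force $f$ to be nonzero somewhere on $A \times B$—a contradiction.

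The main obstacle is the bookkeeping around the edge cases and the exact choice of exponents: one must confirm that admissible $k_1 < |A|$, $k_2 < |B|$ with $k_1 + k_2 = |A+B|$ actually exist (this can fail if $|A+B|$ is too large, e.g. $|A+B| \ge |A| + |B| - 1$ with equality forcing a unique choice $k_1 = |A|-1$, $k_2 = |B|-1$), and that the relevant binomial coefficient survives modulo $p$. Since our standing assumption gives $|A+B| \le |A| + |B| + \lambda - 1$ and in the problematic regime $\lambda = \lambda(A+B)$ could be small, the binding constraint is really $|A+B| \le |A| + |B| - 2 + (\lambda+1)$, leaving just enough room; and $|A+B| \le p - 1$ guarantees $p \nmid \binom{|A+B|}{k_1}$ for every $0 \le k_1 \le |A+B|$. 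The degenerate situations—$A$ or $B$ a singleton, or $A+B = \F_p$—should be checked directly, as in those cases one of the two alternatives in the theorem holds immediately (for instance if $|A+B| = p$ then $A+B = \F_p$ is $(p-2)$-null, making the first alternative easy to satisfy, or the second inequality trivial). With these checks in place, the contradiction closes the argument.
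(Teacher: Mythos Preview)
Your overall strategy---set $\mu=\lambda(A+B)+1$, note that $A$ and $B$ are $\mu$-null, apply Theorem~\ref{Thm: GCN} to $f(X,Y)=\prod_{c\in A+B}(X+Y-c)$---is exactly the paper's. The gap is in the choice of the monomial $X^{k_1}Y^{k_2}$. You insist on a \emph{top-degree} monomial, $k_1+k_2=|A+B|$, and then claim that $k_2=|A+B|-|A|+1<|B|$ ``follows exactly from the assumed inequality $|A+B|<|A|+|B|+\lambda$.'' It does not: that inequality only gives $|A+B|\le |A|+|B|+\lambda-1$, whereas $k_2<|B|$ requires $|A+B|\le |A|+|B|-2$. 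Whenever $\lambda(A+B)\ge 1$ and $|A+B|\ge |A|+|B|-1$, there is \emph{no} pair $(k_1,k_2)$ with $k_1<|A|$, $k_2<|B|$, $k_1+k_2=|A+B|$, so your argument collapses precisely in the range where the structured version is supposed to improve on ordinary Cauchy--Davenport.

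The missing idea is to drop below top degree and exploit the slack in Theorem~\ref{Thm: GCN}. Choose $k,\ell$ with $k\le |A|-1$, $\ell\le |B|-1$ and $k+\ell=|A+B|-\mu$; such a pair exists because the contradiction hypothesis gives $|A+B|\le(|A|-1)+(|B|-1)+\mu$. Writing $\Pi_{A+B}(Z)=\sum_r e_r Z^{|A+B|-r}$ with $e_1=\dots=e_{\mu-1}=0$ and $e_\mu\ne 0$, the only contribution to $X^kY^\ell$ in $f=\Pi_{A+B}(X+Y)$ comes from the term $e_\mu(X+Y)^{k+\ell}$, so the coefficient is $\binom{k+\ell}{k}e_\mu\ne 0$ (here $k+\ell<|A+B|\le p$). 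Now $\deg f=(k+\ell)+\mu$ with $k<|A|$, $\ell<|B|$, and Theorem~\ref{Thm: GCN} yields the contradiction. This is the step your proposal gestures at with the bound $\deg(f)\le k_1+k_2+(\lambda+1)$ but never actually uses.
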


Informally, this says that a sumset of structured sets is either fairly structured, or else not too small.

\begin{proof} We adapt the argument of \cite[Thm.3.2]{Al}. Put $C:=A+B$, and consider the polynomial
 \[f=\prod_{c\in C} (X+Y-c)\in \F_p[X,Y].\]
Then $f$ vanishes over the grid $A\times B$. 

Arguing by contradiction, let us assume that $\lambda(C)<\min\{\lambda(A),\lambda(B)\}$ and $|C|<|A|+|B|+\lambda(C)$. Set $\mu:=\lambda(C)+1$. Then $\mu\leq \min\{\lambda(A),\lambda(B)\}$ and $|C|\leq |A|+|B|+\lambda(C)-1=(|A|-1)+(|B|-1)+\mu$. Choose non-negative integers $k$ and $\ell$ satisfying $k\leq |A|-1$, $\ell\leq |B|-1$, and $|C|=k+\ell+\mu$.

We may apply Theorem~\ref{Thm: GCN} since $A$ and $B$ are $\mu$-null, and $\deg(f)=k+\ell+\mu$, where $k<|A|$ and $\ell<|B|$. As $f$ vanishes over the grid $A\times B$, it follows that the coefficient of $X^{k}Y^{\ell}$ in $f$ is zero. Now let us find, explicitly, the coefficient of $X^kY^\ell$. Suppose that the characteristic polynomial of $C$ expands as $\Pi_C(Z)=\sum_{r=0}^{|C|} e_rZ^{|C|-r}$. Then
\begin{align*} 
f=\Pi_C(X+Y)=\sum_{r=0}^{|C|} e_r(X+Y)^{|C|-r}.
\end{align*}
The monomial $X^kY^\ell$ appears in the expansion of $(X+Y)^{k+\ell}$, which corresponds to $r=\mu$. Hence, the coefficient of $X^kY^\ell$ equals
\begin{align*}
\binom{k+\ell}{k}e_\mu.
\end{align*}
Recall that $\mu=\lambda(C)+1$, which implies that $e_\mu\neq 0$. Also $k+\ell<|C|\leq p$, which implies that the above binomial coefficient is non-zero in $\F_p$. We have therefore obtained a contradiction, as desired.
\end{proof}

In principle, the key instance of Theorem~\ref{Thm: GCN} is when $k_1=|A_1|-1,\dots, k_n=|A_n|-1$. If one knows this particular case, then one can generalize to arbitrary $k_1<|A_1|,\dots, k_n<|A_n|$. The simplest way would be to trim the grid, as in \cite{Al}, which would be legitimate if we worked over arbitrary grids. Over structured grids, the germane idea is to adapt the polynomial by a degree-raising trick--namely, consider the polynomial 
\begin{align*}
X_1^{|A_1|-k_1-1}\dots X_n^{|A_n|-k_n-1} f(X_1,\dots,X_n).
\end{align*}

The general form of Theorem~\ref{Thm: GCN} is more flexible in applications, and the proof of Theorem~\ref{thm: SCD} above is a case in point. Moving forward, in Section~\ref{sec: CCT}, it will be convenient to adopt the case $k_1=|A_1|-1,\dots, k_n=|A_n|-1$ as the main case of interest; this will simplify the formulas appearing therein. However, one should keep in mind the degree-raising trick.

\section{Nullity and symmetric moments}
 We now turn to a different viewpoint on nullity. Again, let $F$ be a field and let $A\subseteq F$ be a finite subset. If $A$ has $m$ elements, and $g(X_1,\dots,X_m)$ is a symmetric polynomial in $m$ variables, then we may evaluate $g$ on $A$ by setting 
 \[g(A)=g(a_1,\dots,a_m)\]
where $a_1,\dots,a_m$ is an enumeration of $A$. This is unambiguous: the evaluation $g(A)$ depends only on the set $A$, and not on the actual enumeration of $A$, since the polynomial $g$ is symmetric.

The elementary symmetric polynomials and the complete symmetric polynomials of degree $r$ in $m$ variables are given by
\begin{align*}
e_r(X_1,\dots,X_m)=\sum_{1\leq i_1<\dots<i_r\leq m} X_{i_1}\dots X_{i_r},\\
h_r(X_1,\dots,X_m)=\sum_{k_1+\dots+k_m=r} X_1^{k_1}\dots X_m^{k_m}.
\end{align*}
By convention, $e_0(X_1,\dots,X_m)=1$ and $h_0(X_1,\dots,X_m)=1$. Note, in addition, that $e_r(X_1,\dots,X_m)=0$ when $r>m$.

By evaluating the elementary symmetric polynomials and the complete symmetric polynomials on $A$, one obtains the \emph{elementary moments} $e_r(A)$, respectively the \emph{complete moments} $h_r(A)$. We may give an alternate description of the nullity of $A$, in terms of these symmetric moments.

\begin{lem}\label{lem: null}
Let $\lambda\in \{0,\dots,|A|\}$.  Then the following are equivalent:
\begin{itemize}
\item[(i)] $A$ is $\lambda$-null;
\item[(ii)] the elementary moments of $A$ vanish up to degree $\lambda$, that is, $e_r(A)=0$ for all $1\leq r\leq \lambda$;
\item[(iii)] the complete moments of $A$ vanish up to degree $\lambda$, that is, $h_r(A)=0$ for all $1\leq r\leq \lambda$.
\end{itemize}
\end{lem}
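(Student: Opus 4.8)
The plan is to prove the equivalence by relating all three conditions to the coefficients of the characteristic polynomial $\Pi_A(X)$. The central tool is the classical identity expressing $\Pi_A$ in terms of elementary symmetric functions: if $|A|=m$, then
\[
\Pi_A(X)=\prod_{a\in A}(X-a)=\sum_{r=0}^m (-1)^r e_r(A)\, X^{m-r}.
\]
Thus the coefficient of $X^{m-r}$ in $\Pi_A$ is $(-1)^r e_r(A)$. The equivalence (i)$\Leftrightarrow$(ii) is then immediate by unwinding Definition~\ref{defn: null}: saying the coefficients of $X^{m-1},\dots,X^{m-\lambda}$ all vanish is exactly saying $e_1(A)=\dots=e_\lambda(A)=0$ (the sign is irrelevant). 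So I would state this identity first, cite it as standard (it is just Vieta/expansion of the product), and dispatch (i)$\Leftrightarrow$(ii) in one line.

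The substantive part is (ii)$\Leftrightarrow$(iii). Here I would invoke the Newton-type relation between elementary and complete symmetric polynomials, namely the generating-function identity
\[
\Big(\sum_{r\ge 0} (-1)^r e_r(A)\, t^r\Big)\Big(\sum_{s\ge 0} h_s(A)\, t^s\Big)=1,
\]
which holds as a formal power series identity over any commutative ring (it is the statement that $E(t)H(-t)$ or rather $E(-t)H(t)$ equals $1$, following from $H(t)=\prod (1-a_it)^{-1}$ and $E(t)=\prod(1+a_it)$). Comparing coefficients of $t^d$ for $d\ge 1$ gives the recursion
\[
\sum_{r=0}^{d} (-1)^r e_r(A)\, h_{d-r}(A)=0,
\]
i.e. $h_d(A)=\sum_{r=1}^{d}(-1)^{r+1} e_r(A)\, h_{d-r}(A)$. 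From this one argues by induction on $d$: if $e_1(A)=\dots=e_\lambda(A)=0$, then for $1\le d\le\lambda$ every term on the right has $1\le r\le d\le\lambda$, hence $e_r(A)=0$, so $h_d(A)=0$; this gives (ii)$\Rightarrow$(iii). The converse (iii)$\Rightarrow$(ii) runs the same induction with the roles swapped, using the symmetric form of the recursion, $e_d(A)=\sum_{r=1}^{d}(-1)^{r+1} h_r(A)\, e_{d-r}(A)$, which comes from comparing coefficients in the same generating-function identity read the other way.

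I do not expect a serious obstacle here: the only point requiring a little care is that $h_r$ for $r>m$ does \emph{not} vanish (unlike $e_r$), so the generating function $H(t)$ is a genuine infinite series; but since we only ever look at degrees $d\le\lambda\le m$, this causes no trouble, and the recursion is valid in all degrees regardless. A second minor point is that all these identities are purely formal and hold over any field (indeed any commutative ring), so there is no characteristic restriction. If one prefers to avoid power series, the same recursions can be derived by differentiating $\log \Pi_A$ or directly from $\Pi_A'(X)/\Pi_A(X)=\sum_a (X-a)^{-1}$, but the formal power series route is cleanest and I would present it that way.
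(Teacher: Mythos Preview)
Your proposal is correct and follows essentially the same route as the paper: Vi\`ete's formula for (i)$\Leftrightarrow$(ii), and the convolution identity $\sum_{i=0}^r (-1)^i e_{r-i}(A)\,h_i(A)=0$ together with an induction for (ii)$\Leftrightarrow$(iii). The only cosmetic difference is that you derive this recursion from the generating-function identity $E(-t)H(t)=1$ and spell out the induction, whereas the paper simply states the recursion and leaves the induction implicit.
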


The zeroth moments cannot partake in vanishing, as $e_0(A)=h_0(A)=1$.

\begin{proof}
The equivalence of (i) and (ii) is immediate from Vi\`ete's formula:
\begin{align}\label{eq: viete}
\Pi_A(X)=\prod_{a\in A} (X-a)=\sum_{i=0}^{|A|} (-1)^i e_{i}(A)\: X^{|A|-i}.
\end{align}

The elementary and the complete symmetric polynomials are entwined by the identity
\begin{align*}
\sum_{i=0}^r (-1)^i e_{r-i}(X_1,\dots,X_m)\: h_{i}(X_1,\dots,X_m)=0.
\end{align*}
Evaluating on a finite subset $A$ gives
\begin{align}\label{eq: entwine}
\sum_{i=0}^r (-1)^i e_{r-i}(A)\: h_{i}(A)=0.
\end{align}
This relation bridging the two types of moments implies, in particular, the equivalence of (ii) and (iii). The argument uses an obvious induction.
\end{proof}

As the proof shows, it is evident that the nullity of $A$ is reflected in the vanishing of elementary moments of $A$. The purpose of the above lemma is to uncover the less evident link between the nullity of $A$ and the vanishing of  complete moments of $A$. This link will be of key importance for an up-coming result, Theorem~\ref{Thm: CCT}.

\begin{ex}\label{ex: ore}
Let $F$ be a field of characteristic $p$, where $p$ is a prime, and let $A\subseteq F$ be a coset of a finite additive subgroup. We show that $e_r(A)=0$ for $1\leq r\leq |A|-1$, except possibly when $|A|-r$ is a power of $p$. In view of \eqref{eq: viete}, this precisely translates into the statement, made in Example~\ref{ex: ore0}, that the characteristic polynomial $\Pi_A(X)$ is a linear combination of monomials whose degree is a power of $p$, or $0$. 

Let $k\leq |A|-1$. Consider the polynomial 
\begin{align*}
\Delta_k(X)=e_k(X+A)-e_k(A)
\end{align*} 
where $X+A$ is the translate of the subset $A$ by the indeterminate $X$. Then $\Delta_k(X)$ has degree at most $k$, and it admits $|A|$ roots--namely, the elements of the finite additive subgroup underlying $A$. Therefore $\Delta_k(X)$ is the zero polynomial. Let us put $\Delta_k(X)$ in standard form, so that we can equate its coefficients to $0$. We have
\begin{align*}
e_k(X+A)&=\sum_{B\subseteq A, \: |B|=k\:} \prod_{b\in B} (X+b)=\sum_{B\subseteq A, \: |B|=k\:} \sum_{r=0}^k e_{r}(B)\: X^{k-r}.
\end{align*}
As
\begin{align*}
\sum_{B\subseteq A, \: |B|=k\:} e_{r}(B)=\binom{|A|-r}{k-r}\: e_{r}(A),
\end{align*}
we deduce that
\begin{align*}
\Delta_k(X)=e_k(X+A)-e_k(A)=\sum_{r=0}^{k-1} \binom{|A|-r}{k-r}\: e_{r}(A)\: X^{k-r}.
\end{align*}
Thus, in $F$,
\begin{align}\label{eq: binom}
\binom{|A|-r}{k-r}\: e_{r}(A)=0, \qquad 0\leq r\leq k-1.
\end{align}

Let now $r\in \{1,\dots, |A|-1\}$, such that $|A|-r$ is not a power of $p$, be given. By a well-known property of binomial coefficients, see Fine \cite{F}, there exists $j\in \{1,\dots,|A|-r-1\}$ such that
\begin{align*}
\binom{|A|-r}{j} \not\equiv 0 \mod p.
\end{align*}
Set $k=j+r$. Note that $r< k\leq |A|-1$, so we are in position to invoke \eqref{eq: binom}. We infer that $e_{r}(A)=0$, as desired.

Furthermore, we claim that $e_{r}(A)\neq 0$ when $r=|A|-1$. We have $e_r(A)=e_r(c+A)$ for all $c\in F$ since $\Delta_r(X)$ is the zero polynomial. By definition, $A$ is a translate of a finite additive subgroup $A'$. Thus $e_r(A)=e_r(A')$. The advantage in replacing the coset $A$ by its underlying subgroup $A'$, is that we can easily compute
\begin{align*}
e_r(A')=\sum_{B\subseteq A', \: |B|=r\:} \prod_{b\in B} b=\prod_{b\in A'\setminus\{0\}} b
\end{align*}
since all but one of the indexing subsets $B$ contain $0$. The latter product is evidently non-zero.

This computation reveals the coefficient of $X$ in the characteristic polynomial $\Pi_A(X)$. Indeed, by \eqref{eq: viete}, the coefficient equals $(-1)^{|A|-1}\:e_{|A|-1}(A)$. Observe that $(-1)^{|A|-1}=1$ in $F$; this is clearly true if $p=2$, whereas for $p>2$ we recall that $|A|$ is odd, being a power of $p$. In summary, the coefficient of $X$ in $\Pi_A(X)$ is 
\[e_{|A|-1}(A)=\prod_{b\in A'\setminus\{0\}} b\]
where, once again, $A'$ is the additive subgroup underlying the coset $A$. 
\end{ex}

\section{Vandermonde subsets}

There is yet another fundamental family of symmetric polynomials, the power-sum polynomials
\begin{align*}
p_r(X_1,\dots,X_m)=X_1^{r}+\dots+ X_m^{r}.
\end{align*}
By evaluating them on a finite subset $A\subseteq F$, we obtain the \emph{power-sum moments} $p_r(A)$. These moments are actually classical, whereas the consideration of their elementary and complete counterparts appears to be new. 

\begin{defn}\label{defn: Vandy}
Let $\lambda\in \{0,\dots, |A|\}$. A finite subset $A\subseteq F$ is \emph{$\lambda$-Vandermonde} if the power-sum moments of $A$ vanish up to degree $\lambda$, that is, $p_r(A)=0$ for $1\leq r\leq \lambda$.
\end{defn}

This terminology resonates with the notion of Vandermonde subset of a finite field, as studied by Peter Sziklai and Marcella Tak\'ats \cite{ST}. The Vandermonde condition shares some general features with nullity: any finite subset is $0$-Vandermonde; being $\lambda$-Vandermonde gets stronger as $\lambda$ increases; being $\lambda$-Vandermonde is stable under the operations indicated in Lemma~\ref{lem: nullstab}.

Linking the power-sum polynomials to the elementary symmetric polynomials is Newton's formula:
\begin{align*}
re_r(X_1,\dots,X_m)+\sum_{i=1}^r (-1)^i e_{r-i}(X_1,\dots,X_m)\: p_{i}(X_1,\dots,X_m)=0
\end{align*}
for $1\leq r\leq m$. Evaluating on a finite subset $A$ gives
\begin{align}\label{eq: newton}
re_r(A)+\sum_{i=1}^r (-1)^i e_{r-i}(A)\: p_{i}(A)=0, \qquad 1\leq r\leq |A|.
\end{align}

While being $\lambda$-null and being $\lambda$-Vandermonde are not equivalent, the two notions can be related by using \eqref{eq: newton}.

\begin{lem}
If $A$ is $\lambda$-null, then $A$ is $\lambda$-Vandermonde. The converse is true provided that $\mathrm{char}\:F=0$, or that $\mathrm{char}\:F=p$ and $\lambda<p$.
\end{lem}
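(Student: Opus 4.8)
The plan is to work entirely with Newton's identity \eqref{eq: newton}, combined with the characterization of $\lambda$-nullity through vanishing elementary moments (Lemma~\ref{lem: null}(ii)): $A$ is $\lambda$-null precisely when $e_r(A)=0$ for $1\le r\le\lambda$.

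For the forward implication, suppose $A$ is $\lambda$-null, so $e_r(A)=0$ for $1\le r\le\lambda$. Fix $r$ with $1\le r\le\lambda$ and examine \eqref{eq: newton} for this value of $r$. The leading term $re_r(A)$ vanishes. In the sum $\sum_{i=1}^{r}(-1)^i e_{r-i}(A)\,p_i(A)$, every term with $1\le i\le r-1$ carries the factor $e_{r-i}(A)$ with $1\le r-i\le r-1<\lambda$, hence vanishes as well. Only the term $i=r$ survives, and it equals $(-1)^r e_0(A)\,p_r(A)=(-1)^r p_r(A)$. Therefore $p_r(A)=0$, and since $r$ was arbitrary in $\{1,\dots,\lambda\}$, the subset $A$ is $\lambda$-Vandermonde. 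No hypothesis on the characteristic is needed here.

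For the converse, assume $A$ is $\lambda$-Vandermonde, so $p_r(A)=0$ for $1\le r\le\lambda$, and argue by induction on $r$ that $e_r(A)=0$ for $1\le r\le\lambda$. The base case $r=1$ reads $e_1(A)=p_1(A)=0$. For the inductive step with $2\le r\le\lambda$, apply \eqref{eq: newton}: in the sum, each term with $1\le i\le r-1$ has $e_{r-i}(A)=0$ by the inductive hypothesis (as $1\le r-i\le r-1$), while the term $i=r$ is $(-1)^r p_r(A)=0$ because $r\le\lambda$. What remains is $r\,e_r(A)=0$, so $e_r(A)=0$ once $r$ is invertible in $F$. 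This is the only place the characteristic assumption is used: if $\operatorname{char}F=0$ then every positive integer is a unit, and if $\operatorname{char}F=p$ with $\lambda<p$ then $1\le r\le\lambda<p$ forces $r\not\equiv 0\pmod p$. Hence $e_r(A)=0$ throughout the range, i.e.\ $A$ is $\lambda$-null.

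The whole argument is bookkeeping with Newton's identity; the single substantive point — the ``obstacle'', such as it is — is the division by $r$ in the converse, which is exactly what the hypothesis $\operatorname{char}F=0$ or $\operatorname{char}F=p>\lambda$ is there to license. It is worth noting that this pinpoints why the converse can fail when $\operatorname{char}F=p\le\lambda$: the coefficient $r$ may vanish in $F$, breaking the induction.
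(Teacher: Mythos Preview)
Your proof is correct and follows exactly the approach the paper indicates: the lemma is stated immediately after Newton's formula \eqref{eq: newton} with the remark that the two notions ``can be related by using \eqref{eq: newton}'', and you carry out precisely that bookkeeping. The only mild refinement worth noting is that your forward implication is cleanly non-inductive (the vanishing of the $e_{r-i}(A)$ is already given, so each $p_r(A)$ drops out directly), whereas the converse genuinely requires induction to feed the previously established $e_{r-i}(A)=0$ back into Newton's identity---you have this distinction right.
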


\begin{ex}\label{ex: AddVan}
Let $F$ be a field of characteristic $p$, and let $A\subseteq F$ be a coset of a finite additive subgroup. Then $A$ is $\lambda$-Vandermonde for $\lambda=|A|-2$, though not for $\lambda=|A|-1$. 

Indeed, we know from Example~\ref{ex: ore} that $e_r(A)=0$ whenever $r\in \{1, \dots, |A|-2\}$ is not a multiple of $p$. An inductive use of \eqref{eq: newton} yields $p_r(A)=0$ for $1\leq r\leq |A|-2$. Thus $A$ is $\lambda$-Vandermonde for $\lambda=|A|-2$. 

For $r=|A|-1$, \eqref{eq: newton} gives $re_r(A)+(-1)^rp_r(A)=0$. This amounts to $p_r(A)=(-1)^re_r(A)$ since $r=-1$ in $F$. We also know from Example~\ref{ex: ore} that $e_{r}(A)\neq 0$, whence $p_{r}(A)\neq 0$. Thus $A$ is not $\lambda$-Vandermonde for $\lambda=|A|-1$.

Here is an alternate argument, which explains the `Vandermonde' terminology. We know that $p_r(A)=0$ for $1\leq r\leq |A|-2$, but also for $r=0$ since the size of $A$ is a multiple of $p$. If we had $p_r(A)= 0$ for $r=|A|-1$ as well, that would contradict the invertibility of a Vandermonde matrix associated to $A$.
\end{ex}

Vandermonde subsets are not the main focus of this paper, but they are worth mentioning on two accounts. Firstly, we believe that the power-sum moments are useful for a more systematic study of nullity--a study that we do not attempt herein. For instance, the power-sum viewpoint makes it obvious that, in formally real fields, one can only speak of $1$-nullity. This observation is not immediate from Definition~\ref{defn: null}. Secondly, Vandermonde subsets fit our main theme--the interplay between polynomials and structured grids. To wit, we have the following pair of results.

\begin{thm}\label{thm: War1}
Let $A_1,\dots,A_n$ be $\lambda$-Vandermonde finite subsets of a field $F$. Let $f\in F[X_1, \dots, X_n]$ be a polynomial with the property that the degree of each variable is at most $\lambda$. Then
\[\sum_{a\in A_1\times \dots \times A_n} f(a)=f(0) |A_1|\dots |A_n|.\] 
\end{thm}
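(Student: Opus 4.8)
The plan is to reduce the multivariate sum to a product of one-variable sums and then to evaluate each one-variable sum using the Vandermonde hypothesis. By multilinearity of the evaluation map, it suffices to treat a single monomial $f = X_1^{d_1}\cdots X_n^{d_n}$ with each $d_i\le\lambda$; the general statement follows by summing over the monomials of $f$, noting that $f(0)$ picks out exactly the constant term, i.e. the contribution of the monomial with $d_1=\dots=d_n=0$. For such a monomial,
\[
\sum_{a\in A_1\times\dots\times A_n} f(a)=\prod_{i=1}^n \Big(\sum_{a_i\in A_i} a_i^{d_i}\Big)=\prod_{i=1}^n p_{d_i}(A_i).
\]

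The key step is to observe that $p_0(A_i)=|A_i|$ (viewed in $F$), while for $1\le d_i\le\lambda$ the $\lambda$-Vandermonde hypothesis gives $p_{d_i}(A_i)=0$. Hence the product above vanishes unless every exponent $d_i$ is zero, in which case it equals $|A_1|\cdots|A_n|$. Summing over all monomials of $f$, only the constant monomial survives, and its coefficient is precisely $f(0)$. This yields
\[
\sum_{a\in A_1\times\dots\times A_n} f(a)=f(0)\,|A_1|\cdots|A_n|,
\]
as claimed.

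There is essentially no obstacle here: the result is a direct consequence of the definition of $\lambda$-Vandermonde once one separates variables, and the only point requiring a word of care is the bookkeeping that $p_0(A_i)=|A_i|$ in $F$ (which may be zero if $\mathrm{char}\,F$ divides $|A_i|$, but that only makes the identity easier) and that the hypothesis "the degree of each variable is at most $\lambda$" is exactly what is needed to guarantee every relevant exponent lies in the range $\{0,\dots,\lambda\}$. If one prefers a proof without invoking multilinearity explicitly, one can instead write $f=\sum_{d_1,\dots,d_n} c_{d_1,\dots,d_n} X_1^{d_1}\cdots X_n^{d_n}$, interchange the (finite) sums over the grid and over the monomials, and apply the factorization to each term; the argument is identical.
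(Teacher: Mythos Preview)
Your proof is correct and follows essentially the same approach as the paper: expand $f$ into monomials, factor the grid sum as a product of power-sum moments $p_{d_i}(A_i)$, and use the $\lambda$-Vandermonde hypothesis together with the per-variable degree bound to kill every term except the constant one. The paper writes this out via the coefficient expansion and interchange of sums that you mention as the alternative formulation at the end; the two presentations are the same argument.
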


\begin{thm}\label{thm: War2}
Let $F$ be a field of characteristic $p$. Let $A_1,\dots,A_n$ be $\lambda$-Vandermonde finite subsets of $F$, such that $p$ divides $|A_1|,\dots, |A_n|$. Let $f\in F[X_1, \dots, X_n]$ be a polynomial whose degree satisfies $\deg(f)< n(\lambda+1)$. Then
\[\sum_{a\in A_1\times \dots \times A_n} f(a)=0.\] 
\end{thm}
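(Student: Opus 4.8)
The plan is to reduce Theorem~\ref{thm: War2} to Theorem~\ref{thm: War1} by a decomposition of $f$ into pieces, each of which is either handled by the previous theorem or contributes a sum that is visibly zero because some $|A_i|$ is a multiple of $p$. Write $f$ as a linear combination of monomials $X_1^{d_1}\cdots X_n^{d_n}$; by linearity of the grid-sum, it suffices to prove the claim for a single such monomial $M=X_1^{d_1}\cdots X_n^{d_n}$ with $d_1+\dots+d_n=\deg(M)<n(\lambda+1)$. A pigeonhole argument shows that at least one exponent, say $d_j$, satisfies $d_j\leq\lambda$; indeed if every $d_i\geq\lambda+1$ then $\deg(M)\geq n(\lambda+1)$, contradicting the degree bound.

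Now the grid-sum factors as a product of one-variable sums,
\[
\sum_{a\in A_1\times\dots\times A_n} M(a)=\prod_{i=1}^n\Big(\sum_{a_i\in A_i} a_i^{d_i}\Big)=\prod_{i=1}^n p_{d_i}(A_i),
\]
with the convention that the $i$-th factor is $|A_i|$ when $d_i=0$. The plan is to show the $j$-th factor vanishes. If $d_j\geq 1$, then $1\leq d_j\leq\lambda$, so $p_{d_j}(A_j)=0$ because $A_j$ is $\lambda$-Vandermonde. If $d_j=0$, then the $j$-th factor is $|A_j|$, which is zero in $F$ since $p\mid|A_j|$ by hypothesis. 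Either way the product is zero, hence $\sum_{a} M(a)=0$, and summing over all monomials of $f$ gives the result.

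I do not expect a serious obstacle here; the argument is essentially a pigeonhole observation combined with the multiplicativity of grid-sums over monomials. The one point that needs a little care is the bookkeeping around the $d_i=0$ case: one must remember that $p_0(A_i)=|A_i|$ rather than treating $p_0$ as covered by the Vandermonde hypothesis (Definition~\ref{defn: Vandy} only asserts vanishing for $1\leq r\leq\lambda$), and this is precisely where the divisibility assumption $p\mid|A_i|$ enters. An alternative, slightly slicker route would be to invoke Theorem~\ref{thm: War1} directly: if all $d_i\leq\lambda$ then $M$ satisfies the per-variable degree bound of that theorem, and since $M(0)=0$ (as $\deg(M)<n(\lambda+1)$ forces $M$ to be non-constant — here one uses $\lambda\geq 0$, so $n(\lambda+1)\geq n\geq 1$), the grid-sum is $M(0)|A_1|\cdots|A_n|=0$; if instead some $d_i\geq\lambda+1$, then by the degree bound some other $d_j\leq\lambda$ and in particular, by reorganizing, one still lands in a situation where a vanishing factor appears. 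I would present the direct factorization argument above, as it is the cleanest and self-contained.
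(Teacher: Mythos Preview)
Your main argument is correct and is essentially the paper's own proof: both expand the grid-sum over monomials as a product of power-sums $\prod_i p_{d_i}(A_i)$, then use the degree bound (via pigeonhole) together with the $\lambda$-Vandermonde hypothesis and the divisibility $p\mid|A_i|$ to exhibit a vanishing factor. One minor slip in your parenthetical ``alternative route'': the bound $\deg(M)<n(\lambda+1)$ does \emph{not} force $M$ to be non-constant (take $M=1$), so $M(0)$ need not vanish; the conclusion there is still fine, however, since $|A_1|\cdots|A_n|=0$ in $F$ regardless of $M(0)$.
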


Theorems~\ref{thm: War1} and ~\ref{thm: War2} are closely related, and we give a joint proof below. But they work somewhat differently: in Theorem~\ref{thm: War2}, the underlying grid is a bit more structured, allowing for the degree requirement on $f$ to be relaxed.

\begin{proof}[Proof of Theorems~\ref{thm: War1} and ~\ref{thm: War2}.] 
Let $f_{d_1,\dots,d_n}$ denote the coefficient of $X_1^{d_1}\dots X_n^{d_n}$ in $f$. So 
\begin{align*}
f=\sum_{d_1,\dots,d_n} f_{d_1,\dots,d_n} \:X_1^{d_1}\dots X_n^{d_n}
\end{align*}
where each of $d_1,\dots,d_n$ runs over the non-negative integers. Then
\begin{align}\label{eq: eqWar}
\sum_{a\in A_1\times \dots \times A_n} f(a)=\sum_{d_1,\dots,d_n} f_{d_1,\dots,d_n} \Big( \sum_{a_1\in A_1}a_1^{d_1}\Big)\dots \Big( \sum_{a_n\in A_n} a_n^{d_n}\Big).
\end{align}
As $A_1,\dots,A_n$ are $\lambda$-Vandermonde, we may restrict the summation on the right-hand side of \eqref{eq: eqWar} to those tuples satisfying $d_i=0$ or $d_i> \lambda$, for each $i$. 

The degree assumption of Theorem~\ref{thm: War1} is that $f_{d_1,\dots,d_n}=0$, except possibly when $d_i\leq \lambda$ for each $i$. Thus, on the right-hand side of \eqref{eq: eqWar}, only the term corresponding to $d_1=\dots=d_n=0$ remains. We obtain 
\[\sum_{a\in A_1\times \dots \times A_n} f(a)=f(0) |A_1|\dots |A_n|.\]

Consider now the setup of Theorem~\ref{thm: War2}. The cardinality assumption on the sets $|A_1|$, $\dots$, $|A_n|$
means that the indexing on the right-hand side of \eqref{eq: eqWar} can be further restricted to those tuples satisfying $d_i> \lambda$, for each $i$. On the other hand the degree assumption on $f$ is that $f_{d_1,\dots,d_n}=0$ when $d_1+\dots+d_n \geq n(\lambda+1)$. Therefore
\[\sum_{a\in A_1\times \dots \times A_n} f(a)=0\] 
in this case.
\end{proof}

By Example~\ref{ex: AddVan} above, Theorem~\ref{thm: War2} applies when $A_1,\dots,A_n$ are finite additive subgroups and, more generally, cosets thereof. We deduce the following consequence.

\begin{cor}\label{cor: War2add}
Let $F$ be a field of characteristic $p$, and let $A_1,\dots,A_n$ be finite additive subgroups of $F$. Let $f\in F[X_1, \dots, X_n]$ be a polynomial whose degree satisfies 
\[\deg(f)< n\:\big(\min\{|A_1|,\dots,|A_n|\}-1\big).\] 
Then
\[\sum_{a\in A_1\times \dots \times A_n} f(a)=0.\] 
\end{cor}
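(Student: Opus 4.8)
The plan is to obtain this as a direct specialization of Theorem~\ref{thm: War2}. The key point is that a finite additive subgroup $A\subseteq F$ is $\lambda$-Vandermonde for $\lambda=|A|-2$, by Example~\ref{ex: AddVan}, and its size $|A|$ is a power of the characteristic $p$, hence in particular divisible by $p$. So the hypotheses of Theorem~\ref{thm: War2} are met with the common value $\lambda=\min\{|A_1|,\dots,|A_n|\}-2$.

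First I would set $m=\min\{|A_1|,\dots,|A_n|\}$. For each $i$, the subgroup $A_i$ is $(|A_i|-2)$-Vandermonde; since being $\lambda$-Vandermonde only gets stronger as $\lambda$ increases, and since $|A_i|-2\geq m-2$, each $A_i$ is $(m-2)$-Vandermonde. Thus $A_1,\dots,A_n$ are jointly $\lambda$-Vandermonde for $\lambda=m-2$. Next, $p$ divides each $|A_i|$, because $|A_i|$ is a power of $p$ (each $A_i$ is an affine—here, linear—space over the prime subfield of $F$, as noted in Example~\ref{ex: ore0}). Here one should keep in mind the minor standing hypothesis that no $A_i$ is a singleton, so that $|A_i|\geq p\geq 2$ and $m-2\geq 0$; this is exactly what makes $\lambda=m-2$ a legitimate value in Definition~\ref{defn: Vandy}.

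Then I would simply invoke Theorem~\ref{thm: War2}: its degree hypothesis is $\deg(f)<n(\lambda+1)=n(m-1)=n\big(\min\{|A_1|,\dots,|A_n|\}-1\big)$, which is precisely the hypothesis in the corollary. The conclusion $\sum_{a\in A_1\times\dots\times A_n} f(a)=0$ follows immediately.

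I do not anticipate a real obstacle here; the corollary is genuinely a one-line consequence once the two ingredients (the Vandermonde level $|A|-2$ of an additive subgroup, and the divisibility of $|A|$ by $p$) are in place, and both are already recorded in the excerpt. The only thing to be careful about is the bookkeeping with the degenerate singleton case and making sure the relaxation from $|A_i|-2$ down to the common $m-2$ is stated, since Theorem~\ref{thm: War2} requires a single $\lambda$ working for all the $A_i$ simultaneously.
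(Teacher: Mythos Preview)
Your proposal is correct and matches the paper's own argument: the corollary is derived directly from Theorem~\ref{thm: War2} via Example~\ref{ex: AddVan}, using that each $A_i$ is $(|A_i|-2)$-Vandermonde (hence $(m-2)$-Vandermonde for $m=\min_i|A_i|$) and that $p\mid |A_i|$. Your handling of the singleton case is also appropriate, since in that case the degree hypothesis $\deg(f)<0$ renders the statement vacuous.
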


We strengthen the above corollary in the next section. 

\begin{rem}[Pete Clark, personal communication]
Anurag Bishnoi and Pete Clark have independently obtained closely related results on polynomials over Vandermonde grids. Their unpublished work is reported in \cite{Cl2}.
\end{rem}

\section{The complete coefficient theorem}\label{sec: CCT}
For the purposes of this section, it will be convenient to introduce a notation. Let $F$ be a field. For each point $a=(a_1,\dots,a_n)$ in a finite grid $A_1\times \dots \times A_n\subseteq F^n$, put 
\begin{align*}
w_a=\frac{1}{\Pi'_{A_1}(a_1)\dots \Pi'_{A_n}(a_n)}.
\end{align*}
In this formula, $\Pi'_{A_1}, \dots, \Pi'_{A_n}$ are the formal derivatives of the characteristic polynomials of $A_1$, $\dots$, $A_n$. Given a finite subset $A\subseteq F$, we have $\Pi'_A(a)\neq 0$ for each $a\in A$ since $\Pi_A$ is, by definition, a separable polynomial. We can actually write down an explicit formula:
\begin{align*}
\Pi'_A(a)= \prod_{b\in A, b\neq a} (a-b).
\end{align*}

The following Coefficient Theorem--due, independently, to Uwe Schauz \cite{Sch}, Micha\l{} Laso\'n \cite{L}, Roman Karasev and Fedor Petrov \cite{KP}--is surprisingly recent.

\begin{thm}[\cite{Sch, L, KP}]\label{Thm: CT}  Let $A_1,\dots,A_n$ be finite subsets of a field $F$. Assume that $f\in F[X_1, \dots, X_n]$ is a polynomial whose degree satisfies
\[\deg(f)\leq (|A_1|-1)+\dots+(|A_n|-1).\] 
Then the coefficient of the monomial $X_1^{|A_1|-1}\dots X_n^{|A_n|-1}$ in $f$ equals
\begin{align*}
\sum_{a\in A_1\times \dots \times A_n} w_a\: f(a).
\end{align*}
\end{thm}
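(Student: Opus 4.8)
The plan is to prove the Coefficient Theorem by reducing the multivariate statement to the univariate case via Lagrange interpolation, which is the natural mechanism lurking behind the weights $w_a$. First I would recall that, for a single finite subset $A = \{a_1, \dots, a_m\} \subseteq F$ and any polynomial $g \in F[X]$, the Lagrange interpolation polynomial agreeing with $g$ on $A$ is
\[
L_A(g)(X) = \sum_{a \in A} g(a)\, \frac{\Pi_A(X)}{(X-a)\,\Pi'_A(a)},
\]
using the explicit formula $\Pi'_A(a) = \prod_{b \neq a}(a-b)$ noted just above the theorem. The key observation is that $\Pi_A(X)/(X-a)$ is monic of degree $m-1$, so the coefficient of $X^{m-1}$ in $L_A(g)$ is exactly $\sum_{a \in A} g(a)/\Pi'_A(a)$. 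Moreover $g - L_A(g)$ vanishes on $A$, hence is divisible by $\Pi_A$; when $\deg g \le m-1$ this forces $g = L_A(g)$, but we will want the slightly more flexible fact that $g \equiv L_A(g) \pmod{\Pi_A(X)}$ always, and that the coefficient extraction above is what matters.

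Next I would set this up in $n$ variables. Write $m_i = |A_i|$ and view $f$ as an element of $F[X_1,\dots,X_n]$. Performing polynomial division of $f$ by $\Pi_{A_1}(X_1), \dots, \Pi_{A_n}(X_n)$ successively (in each variable $X_i$, divide by the monic polynomial $\Pi_{A_i}(X_i)$), I obtain
\[
f = \sum_{i=1}^n h_i\, \Pi_{A_i}(X_i) + \tilde f,
\]
where $\tilde f$ has degree $< m_i$ in each variable $X_i$, and — crucially — $\deg \tilde f \le \deg f$, since division by a monic polynomial never raises the total degree. The remainder $\tilde f$ agrees with $f$ on the whole grid $A_1 \times \dots \times A_n$, because each $\Pi_{A_i}(X_i)$ vanishes there. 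Now I claim two things about $\tilde f$: (a) its coefficient of $X_1^{m_1-1}\dots X_n^{m_n-1}$ equals that of $f$, and (b) $\sum_{a} w_a\, f(a) = \sum_a w_a\, \tilde f(a)$. Claim (b) is immediate from $f = \tilde f$ on the grid. Claim (a) is where the degree hypothesis $\deg f \le \sum (m_i - 1)$ enters: any monomial of $f$ of the form $X_1^{m_1-1}\dots X_n^{m_n-1}$ already has degree $\sum(m_i-1)$, so it is a top-degree monomial and cannot be altered by subtracting $h_i\,\Pi_{A_i}(X_i)$ — here I would need to check that $h_i \Pi_{A_i}(X_i)$ contributes nothing to this particular monomial, which follows because such a contribution would need $X_i^{m_i}$ times something, forcing $X_i$-degree $\ge m_i > m_i - 1$. (A small subtlety: $h_i$ might have large degree in principle, but the total-degree bound $\deg f \le \sum(m_i-1)$ together with $\deg \Pi_{A_i} = m_i$ controls things; I should double-check the bookkeeping here.)

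Finally I would evaluate both sides for the reduced polynomial $\tilde f$. Since $\tilde f$ has degree $< m_i$ in each variable separately, iterated Lagrange interpolation is exact: treating $\tilde f$ as a polynomial in $X_1$ with coefficients in $F[X_2,\dots,X_n]$, we have $\tilde f = L_{A_1}(\tilde f)$, and iterating,
\[
\tilde f(X_1,\dots,X_n) = \sum_{a \in A_1 \times \dots \times A_n} \tilde f(a)\, \prod_{i=1}^n \frac{\Pi_{A_i}(X_i)}{(X_i - a_i)\,\Pi'_{A_i}(a_i)}.
\]
Extracting the coefficient of $X_1^{m_1-1}\dots X_n^{m_n-1}$: in the $i$-th factor, $\Pi_{A_i}(X_i)/(X_i-a_i)$ is monic of degree $m_i - 1$, so its coefficient of $X_i^{m_i-1}$ is $1$, and the product of these leading terms picks out exactly $\prod_i 1/\Pi'_{A_i}(a_i) = w_a$. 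Hence the coefficient of $X_1^{m_1-1}\dots X_n^{m_n-1}$ in $\tilde f$ is $\sum_a w_a\, \tilde f(a)$, and combined with claims (a) and (b) this is $\sum_a w_a\, f(a)$, as desired. I expect the main obstacle to be the careful justification of claim (a) — precisely, verifying that the division process does not disturb the target monomial under the stated total-degree hypothesis, as opposed to a per-variable hypothesis; the rest is a clean assembly of Lagrange interpolation applied variable by variable.
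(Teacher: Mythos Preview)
Your approach is correct and genuinely different from the paper's. The paper does not reduce $f$ at all: it expands $\sum_a w_a f(a)$ monomial by monomial as
\[
\sum_{d_1,\dots,d_n} f_{d_1,\dots,d_n}\,\prod_{i=1}^n\Big(\sum_{a_i\in A_i}\frac{a_i^{d_i}}{\Pi'_{A_i}(a_i)}\Big)
\]
and evaluates each inner sum via Sylvester's identity (Lemma~\ref{lem: Syl}), which gives $0$ for $d_i<|A_i|-1$ and $h_{d_i-|A_i|+1}(A_i)$ otherwise. The degree bound then kills every term except $(d_1,\dots,d_n)=(|A_1|-1,\dots,|A_n|-1)$. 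Your route instead reduces $f$ modulo the $\Pi_{A_i}(X_i)$ to a polynomial $\tilde f$ of small partial degrees and then applies iterated Lagrange interpolation exactly; in effect you only ever use the Euler part of Sylvester's identity (the vanishing for $d<|A|-1$), hidden inside the interpolation formula. The paper's direct computation is shorter and, more importantly, generalizes immediately to the $\lambda$-null setting of Theorem~\ref{Thm: CCT}, where the higher complete moments $h_r(A_i)$ enter and the reduction-to-$\tilde f$ trick no longer suffices. Your argument, on the other hand, is more self-contained and avoids any appeal to Sylvester's identity as a standalone lemma.

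One correction to your justification of claim~(a): the sentence ``such a contribution would need $X_i^{m_i}$ times something'' is not right, since $\Pi_{A_i}(X_i)$ has lower-order terms in $X_i$. The fix is exactly the bookkeeping you flag in your parenthetical. The successive division gives $\deg h_i\le\deg f-m_i\le\sum_j(m_j-1)-m_i$. If the monomial $X_1^{m_1-1}\cdots X_n^{m_n-1}$ arose in $h_i\Pi_{A_i}(X_i)$ from a term $c_rX_i^r$ of $\Pi_{A_i}$ with $r\le m_i-1$, then $h_i$ would contain $X_1^{m_1-1}\cdots X_i^{m_i-1-r}\cdots X_n^{m_n-1}$, whose total degree is $\sum_j(m_j-1)-r>\sum_j(m_j-1)-m_i\ge\deg h_i$, a contradiction. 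With this in place your proof is complete.
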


Our second main result is a generalization of Theorem~\ref{Thm: CT}, that we term the Complete Coefficient Theorem.

\begin{thm}\label{Thm: CCT}  Let $A_1,\dots,A_n$ be $\lambda$-null finite subsets of a field $F$. Assume that $f\in F[X_1, \dots, X_n]$ is a polynomial whose degree satisfies
\[\deg(f)\leq (|A_1|-1)+\dots+(|A_n|-1)+\lambda.\] 
Then the coefficient of the monomial $X_1^{|A_1|-1}\dots X_n^{|A_n|-1}$ in $f$ equals
\begin{align*}
\sum_{a\in A_1\times \dots \times A_n} w_a\: f(a).
\end{align*}
\end{thm}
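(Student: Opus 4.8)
The plan is to reduce the Complete Coefficient Theorem to the classical Coefficient Theorem (Theorem~\ref{Thm: CT}) by a degree-lowering device that trades in the extra $\lambda$ degrees of freedom against the lacunary shape of the characteristic polynomials. The natural starting point is to write $f$ in the form guaranteed by Alon's algebraic result, or rather to exploit the following: modulo the ideal generated by $\Pi_{A_1}(X_1),\dots,\Pi_{A_n}(X_n)$, the polynomial $f$ is congruent to a unique polynomial $\tilde f$ in which the degree of $X_i$ is at most $|A_i|-1$. The value of $f$ at every grid point is unchanged, $f(a)=\tilde f(a)$, so the right-hand side $\sum_a w_a f(a)$ is unchanged. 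The classical Coefficient Theorem applied to $\tilde f$ (whose degree is at most $\sum_i(|A_i|-1)$, automatically, since each variable is capped) tells us that $\sum_a w_a \tilde f(a)$ equals the coefficient of $X_1^{|A_1|-1}\dots X_n^{|A_n|-1}$ in $\tilde f$. So the whole content of the theorem is the claim that the coefficient of the top monomial $X_1^{|A_1|-1}\dots X_n^{|A_n|-1}$ is the same in $f$ and in $\tilde f$.

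To see that last claim, I would track what the reduction step does to this particular coefficient. Reducing $f$ modulo $\Pi_{A_i}(X_i)$ means repeatedly replacing $X_i^{|A_i|}$ by $-\sum_{r=0}^{|A_i|-\lambda-1} c_r^{(i)} X_i^r$, where I have used that $A_i$ is $\lambda$-null so that $\Pi_{A_i}(X_i)=X_i^{|A_i|}+\sum_{r\le |A_i|-\lambda-1} c_r^{(i)} X_i^{r}$ has a gap: no terms in degrees $|A_i|-1$ down to $|A_i|-\lambda$. The key bookkeeping point is that each such substitution, applied to a monomial, drops the total degree by at least $\lambda+1$ (we remove $X_i^{|A_i|}$ and put back something of $X_i$-degree at most $|A_i|-\lambda-1$). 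Now, the monomials in $f$ that could possibly contribute to $X_1^{|A_1|-1}\dots X_n^{|A_n|-1}$ after reduction are: (a) that monomial itself, already reduced, contributing its coefficient in $f$; and (b) monomials of strictly higher degree that collapse onto it. But $\deg(f)\le \sum_i(|A_i|-1)+\lambda$, so any monomial of higher degree than the target $X_1^{|A_1|-1}\dots X_n^{|A_n|-1}$ exceeds it by at most $\lambda$ in total degree — not enough to survive even a single substitution, which costs $\lambda+1$. Hence no monomial of $f$ other than the target monomial itself can reduce to the target monomial, and the target monomial of $f$ does not get altered by reduction (it is already in reduced form). Therefore the coefficient of $X_1^{|A_1|-1}\dots X_n^{|A_n|-1}$ in $\tilde f$ equals its coefficient in $f$, which is exactly what we wanted.

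The main obstacle, and the step needing the most care, is the degree-drop estimate for the reduction: one must verify that \emph{each} elementary substitution $X_i^{|A_i|}\mapsto -\sum_{r\le|A_i|-\lambda-1} c_r^{(i)}X_i^r$ strictly lowers total degree by at least $\lambda+1$, and then argue that a composite of such substitutions never \emph{raises} degree, so that a monomial of degree $D$ in $f$ can only reduce to monomials of degree $\le D$ — in particular, to reach the target of degree $\sum_i(|A_i|-1)$ one needs $D=\sum_i(|A_i|-1)$ exactly, forcing $D$ to equal the target degree and the monomial to be the target itself (since the target monomial is the unique reduced monomial of that degree with each $X_i$-exponent $<|A_i|$, and any other monomial of that degree has some exponent $\ge|A_i|$ hence is not yet reduced and must drop further). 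A clean way to formalize this is to put a grading/filtration on $F[X_1,\dots,X_n]$ by total degree and observe that the reduction is degree-non-increasing, with the target degree being the maximal degree at which a reduced monomial of the required shape lives. Once this is pinned down the rest is the routine combination with Theorem~\ref{Thm: CT} sketched above. As an alternative route, one can instead deduce Theorem~\ref{Thm: CCT} directly from the representation $f=\sum_i h_i\Pi_{A_i}(X_i)+\tilde f$ with $\deg(h_i\Pi_{A_i})\le\deg(f)$, note that each $h_i\Pi_{A_i}(X_i)$ vanishes on the grid so contributes nothing to $\sum_a w_a f(a)$, and check — using the lacunary form of $\Pi_{A_i}$ together with the degree bound exactly as in the proof of Theorem~\ref{Thm: GCN} — that $h_i\Pi_{A_i}(X_i)$ cannot contain the monomial $X_1^{|A_1|-1}\dots X_n^{|A_n|-1}$; this is really the same computation repackaged, and either presentation works.
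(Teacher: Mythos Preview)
Your argument is correct, but it takes a genuinely different route from the paper's proof. The paper expands $\sum_a w_a f(a)$ directly via Sylvester's identity (Lemma~\ref{lem: Syl}), obtaining a sum of products $f_{r_1+|A_1|-1,\dots,r_n+|A_n|-1}\cdot h_{r_1}(A_1)\cdots h_{r_n}(A_n)$; the nonzero multiindices are then killed either by the degree bound or by the vanishing of the \emph{complete} moments $h_{r_i}(A_i)$ granted by Lemma~\ref{lem: null}. You instead reduce $f$ modulo the $\Pi_{A_i}(X_i)$ to a grid-reduced $\tilde f$, invoke the classical Theorem~\ref{Thm: CT} on $\tilde f$, and use only the lacunary shape of $\Pi_{A_i}$ (i.e.\ the vanishing of the \emph{elementary} moments) to check that the reduction does not disturb the top coefficient; your alternate packaging via $f=\sum_i h_i\Pi_{A_i}+\tilde f$ is the same computation and mirrors the proof of Theorem~\ref{Thm: GCN}. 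Your approach buys economy---it reuses Theorem~\ref{Thm: CT} as a black box and never needs Lemma~\ref{lem: null} or the full Sylvester identity---while the paper's approach is self-contained, makes the \emph{complete} symmetric polynomials visible (whence the name), and explains why Lemma~\ref{lem: null} is the natural companion to Sylvester's identity.
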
 

A key role in the proof of the theorem is played by the following fact.

\begin{lem}[Sylvester's identity]\label{lem: Syl}
Let $A$ be a finite subset of a field $F$, and let $d$ be a non-negative integer. Then
\begin{align*}
\sum_{a\in A} \frac{a^{d}}{\Pi'_{A}(a)}=\begin{cases}
0 & \textrm{ if } 0\leq d< |A|-1,\\
h_{d-|A|+1}(A) & \textrm{ if } d\geq |A|-1.
\end{cases}
\end{align*}
\end{lem}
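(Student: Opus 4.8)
The plan is to recognize the sum $\sum_{a\in A} a^d/\Pi'_A(a)$ as the value of a Lagrange-interpolation operator applied to the monomial $X^d$, and to evaluate it via partial fractions. Concretely, set $m=|A|$ and recall that for any polynomial $g\in F[X]$ of degree at most $m-1$ one has the Lagrange formula $g(X)=\sum_{a\in A} g(a)\,\Pi_A(X)/\big((X-a)\,\Pi'_A(a)\big)$. Comparing leading coefficients (the coefficient of $X^{m-1}$) on both sides gives the classical fact $\sum_{a\in A} g(a)/\Pi'_A(a) = [\text{coeff. of } X^{m-1} \text{ in } g]$; in particular this sum is $0$ whenever $\deg g<m-1$ and is the leading coefficient of $g$ when $\deg g = m-1$. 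Applying this with $g=X^d$ for $0\le d\le m-1$ already yields the first case and the value $1=h_0(A)$ at $d=m-1$.

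For $d\ge m-1$ the monomial $X^d$ has degree exceeding $m-1$, so I would instead work with the generating function. The cleanest route: consider the rational function $R(X)=\sum_{a\in A} \frac{1}{\Pi'_A(a)\,(X-a)}$. Partial-fraction uniqueness shows $R(X)=1/\Pi_A(X)$, since $1/\Pi_A(X)$ has simple poles exactly at the points of $A$ with residue $1/\Pi'_A(a)$ at $a$, and both sides vanish at infinity. Now expand each term as a formal power series in $1/X$: $\frac{1}{X-a}=\sum_{d\ge 0} a^d X^{-d-1}$, so that
\begin{align*}
\frac{1}{\Pi_A(X)} \;=\; \sum_{a\in A}\frac{1}{\Pi'_A(a)}\sum_{d\ge 0} a^d X^{-d-1} \;=\; \sum_{d\ge 0}\Big(\sum_{a\in A}\frac{a^d}{\Pi'_A(a)}\Big) X^{-d-1}.
\end{align*}
On the other hand, writing $\Pi_A(X)=X^m\prod_{a\in A}(1-a/X)$ and using $1/\Pi_A(X)=X^{-m}\prod_{a\in A}(1-a/X)^{-1}$, the generating-function identity $\prod_{a\in A}(1-aT)^{-1}=\sum_{r\ge 0} h_r(A)\,T^r$ for the complete homogeneous symmetric polynomials gives $1/\Pi_A(X)=\sum_{r\ge 0} h_r(A)\, X^{-m-r}$. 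Matching the coefficient of $X^{-d-1}$ in the two expansions yields exactly $\sum_{a\in A} a^d/\Pi'_A(a)=h_{d-m+1}(A)$ for $d\ge m-1$ and $0$ for $0\le d<m-1$, which is the claim.

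I expect the only delicate point to be making the manipulation of formal power series in $1/X$ rigorous over an arbitrary field — but this is routine once one works in the ring $F((1/X))$ of formal Laurent series, where both the partial-fraction identity $R(X)=1/\Pi_A(X)$ and the symmetric-function expansion $\prod_{a}(1-aT)^{-1}=\sum_r h_r(A)T^r$ are valid identities of elements, so no analytic convergence is needed. An alternative, entirely algebraic, finish avoiding generating functions: prove the statement for $d\ge m-1$ by downward induction, using the polynomial relation $X^d = q(X)\,\Pi_A(X) + s(X)$ with $\deg s<m$, together with the observation that $\sum_{a} a^d/\Pi'_A(a)$ depends only on $s$, and tracking how the remainder changes when $d$ increases by $1$ via $X\cdot\Pi_A(X)$; this reproduces precisely the recursion $\sum_i (-1)^i e_i(A)\,h_{r-i}(A)=0$ satisfied by the complete moments. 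Either way the combinatorial heart is the pairing between $1/\Pi_A$ and the complete symmetric polynomials.
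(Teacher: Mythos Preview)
Your argument is correct and takes a genuinely different route from the paper's. You work with formal Laurent series in $F((1/X))$: the partial-fraction identity $1/\Pi_A(X)=\sum_{a\in A}1/\big(\Pi'_A(a)(X-a)\big)$ is expanded on one side via geometric series, and on the other via the generating function $\prod_{a}(1-aT)^{-1}=\sum_{r\ge 0} h_r(A)\,T^r$; matching coefficients of $X^{-d-1}$ gives the result in one stroke. This is essentially the generating-function proof the paper attributes to Strehl--Wilf and Chen--Louck and then sets aside. The paper instead gives what it calls a ``seemingly new'' proof in the spirit of Euler: it derives a recurrence $S_d(a_1,\dots,a_m)=\sum_{e=0}^{d-1} S_e(a_1,\dots,a_{m-1})\,a_m^{d-e-1}$ for the left-hand side by evaluating a partial-fraction decomposition of $1/\big((X-a_1)\cdots(X-a_{m-1})\big)$ at the extra node $a_m$, checks that the right-hand side $H_d(a_1,\dots,a_m)=h_{d-m+1}(a_1,\dots,a_m)$ satisfies the same recurrence, and verifies the boundary cases $d=0$ and $m=2$. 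Your approach is shorter and more conceptual, packaging the whole identity into a single equality of formal series; the paper's approach is more elementary in the sense that it avoids the formal Laurent-series framework entirely and proceeds by a double induction that could be carried out by hand. Your remark that the only delicate point is the legitimacy of working in $F((1/X))$ is well taken and correctly handled.
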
 

We recall that $h_{d-|A|+1}(A)$ denotes the complete moment of degree $d-|A|+1$. We refer to the Appendix for a leisurely, self-contained discussion of the lemma.

\begin{proof} Let $f_{d_1,\dots,d_n}$ denote the coefficient of the monomial $X_1^{d_1}\dots X_n^{d_n}$ in $f$. So 
\begin{align*}
f=\sum_{d_1,\dots,d_n} f_{d_1,\dots,d_n} \:X_1^{d_1}\dots X_n^{d_n}
\end{align*}
where each of $d_1,\dots,d_n$ runs over the non-negative integers. Using this monomial expansion of $f$, we can write
\begin{align*}
\sum_{a\in A_1\times \dots \times A_n} w_a\: f(a)=\sum_{d_1,\dots,d_n} f_{d_1,\dots,d_n} \bigg( \sum_{a_1\in A_1} \frac{a_1^{d_1}}{\Pi'_{A_1}(a_1)}\bigg)\dots \bigg( \sum_{a_n\in A_n} \frac{a_n^{d_n}}{\Pi'_{A_n}(a_n)}\bigg).
\end{align*}
By Sylvester's identity, the right-hand sum equals
\begin{align*}
&\sum_{d_1\geq |A_1|-1,\dots, d_n\geq |A_n|-1} f_{d_1,\dots,d_n}\cdot h_{d_1-|A_1|+1}(A_1) \dots h_{d_n-|A_n|+1}(A_n)\\
&=\sum_{r_1\geq 0,\dots, r_n\geq 0} f_{r_1+|A_1|-1,\dots, r_n+ |A_n|-1}\cdot h_{r_1}(A_1) \dots h_{r_n}(A_n).
\end{align*}

We need to show that the latter sum equals $f_{|A_1|-1,\dots, |A_n|-1}$. This is precisely the contribution of the multiindex $(r_1,\dots,r_n)=(0,\dots,0)$, as $h_0(A)=1$ for any finite subset $A$. Next, we check that the contribution of a multiindex $(r_1,\dots,r_n)\neq (0,\dots,0)$ vanishes. If $ r_1+\dots+r_n> \lambda$, then $f_{r_1+|A_1|-1,\dots, r_n+ |A_n|-1}=0$ by the degree hypothesis on $f$. If $r_1+\dots+r_n\leq \lambda$, then we argue that some $h_{r_i}(A_i)$ vanishes. Indeed, let $i$ be an index for which $r_i\neq 0$. Then $1\leq r_i\leq \lambda$. It follows from the $\lambda$-nullity of $A_i$, as interpreted through Lemma~\ref{lem: null}, that $h_{r_i}(A_i)=0$. \end{proof}

Theorem~\ref{Thm: CCT} might seem daunting, due to the complicated formula of the weight function $w$. On the one hand, here are two applications in which the weights' formula is actually irrelevant. Firstly, keep the hypotheses of Theorem~\ref{Thm: CCT} and assume further that the coefficient of the monomial $X_1^{|A_1|-1}\dots X_n^{|A_n|-1}$ in $f$ is non-zero; then $f$ cannot vanish at each point of the grid $A_1\times \dots \times A_n$. This is the key case, $k_1=|A_1|-1$, $\dots$, $k_n=|A_n|-1$, of Theorem~\ref{Thm: GCN}. Secondly, if we now assume the coefficient of the monomial $X_1^{|A_1|-1}\dots X_n^{|A_n|-1}$ to be zero, we obtain the following consequence.

\begin{cor}\label{cor: punctured} Let $A_1,\dots,A_n$ be $\lambda$-null finite subsets of a field $F$. Assume that $f\in F[X_1, \dots, X_n]$ is a polynomial whose degree satisfies
\[\deg(f)\leq (|A_1|-1)+\dots+(|A_n|-1)+\lambda.\] 
If the coefficient of the monomial $X_1^{|A_1|-1}\dots X_n^{|A_n|-1}$ in $f$ is zero, then $f$ cannot vanish at all but one point of the grid $A_1\times \dots \times A_n$.
\end{cor}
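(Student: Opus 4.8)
The plan is to derive Corollary~\ref{cor: punctured} directly from Theorem~\ref{Thm: CCT} by a standard contradiction argument. Suppose, for contradiction, that the coefficient of $X_1^{|A_1|-1}\dots X_n^{|A_n|-1}$ in $f$ is zero, yet $f$ vanishes at all grid points of $A_1\times\dots\times A_n$ except for exactly one point $b=(b_1,\dots,b_n)$, at which $f(b)\neq 0$. Applying Theorem~\ref{Thm: CCT} to $f$ — which is legitimate since the $A_i$ are $\lambda$-null and the degree bound on $f$ is exactly the one required — the coefficient in question equals $\sum_{a\in A_1\times\dots\times A_n} w_a\,f(a)$. By our vanishing assumption, all terms of this sum are zero except the one indexed by $a=b$, so the sum collapses to $w_b\,f(b)$. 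Hence $0 = w_b\, f(b)$.

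The remaining step is to observe that $w_b\neq 0$. This is immediate from the definition $w_b = \bigl(\Pi'_{A_1}(b_1)\dots\Pi'_{A_n}(b_n)\bigr)^{-1}$ together with the fact, noted just before Theorem~\ref{Thm: CT}, that $\Pi'_{A_i}(b_i)\neq 0$ for every $b_i\in A_i$ because $\Pi_{A_i}$ is separable; equivalently $\Pi'_{A_i}(b_i)=\prod_{c\in A_i,\,c\neq b_i}(b_i-c)$ is a product of nonzero field elements. Since $w_b$ is in particular well-defined and nonzero, $w_b\,f(b)\neq 0$, contradicting the equality $0=w_b\,f(b)$ derived above. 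This contradiction completes the proof.

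I do not anticipate any genuine obstacle here: the corollary is essentially the ``punctured'' reading of Theorem~\ref{Thm: CCT}, exploiting only that the weights $w_a$ never vanish. The one point worth stating carefully is that ``vanishing at all but one point'' is being interpreted as vanishing at all grid points save one \emph{and} being nonzero there — if $f$ happened to vanish everywhere on the grid the statement would be vacuous in the other direction, but the phrasing already excludes the all-zeros case by speaking of the exceptional point. One could alternatively phrase the argument without contradiction: if $f$ vanishes off a single point $b$, then the stated coefficient equals $w_b f(b)$, which is nonzero whenever $f(b)\neq 0$; thus a zero coefficient forces $f(b)=0$ too, i.e. $f$ vanishes on the whole grid. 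Either formulation is a two-line deduction from Theorem~\ref{Thm: CCT}.
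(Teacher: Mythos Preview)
Your argument is correct and matches the paper's own reasoning: the corollary is stated there as an immediate consequence of Theorem~\ref{Thm: CCT}, using exactly the observation that if $f$ vanishes off a single grid point $b$ then the weighted sum collapses to $w_b f(b)\neq 0$. There is nothing to add.
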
 

Over finite fields, the previous corollary yields results in the spirit of Chevalley's theorem. We illustrate the idea on the following geometric example.

\begin{ex}\label{ex: PP}
Let $A_1,\dots,A_n$ be subsets of a finite field $\F_q$. The feature of the grid $A_1\times\dots \times A_n\subseteq \F_q^n$ that we are interested in is
\begin{itemize}
\item[(PP)] a plane in $\F_q^n$ cannot intersect the grid in a single point.
\end{itemize}
A plane $\pi\subseteq \F_q^n$ has an associated polynomial $f_\pi\in \F_q[X_1,\dots,X_n]$ of degree $q-1$ whose support is $\pi$. Namely, if $\pi$ is given by the equation $c_1x_1+\dots+c_nx_n=0$, then $f_\pi=1-(c_1X_1+\dots+c_nX_n)^{q-1}$ satisfies $f_\pi(x)\neq 0$ if and only if $x\in \pi$. The intersection of $\pi$ and the grid $A_1\times\dots \times A_n$ consists of those points in the grid where $f_\pi$ does not vanish. We aim to apply Corollary~\ref{cor: punctured} to polynomials of the form $f_\pi$. As $f_\pi$ is homogeneous, we can ensure that the monomial $X_1^{|A_1|-1}\dots X_n^{|A_n|-1}$ does not appear in $f_\pi$ by requiring $(|A_1|-1)+\dots+(|A_n|-1)\neq q-1$.

The unstructured outcome is the following: if the grid is large, in the sense that 
\[(|A_1|-1)+\dots+(|A_n|-1)>q-1,\]
then (PP) holds. The structured upshot of Corollary~\ref{cor: punctured} deals with smaller null grids: if $A_1,\dots,A_n$ are $\lambda$-null and
\[q-1-\lambda\leq (|A_1|-1)+\dots+(|A_n|-1)<q-1,\]
then (PP) holds.
\end{ex}

On the other hand, we can use Theorem~\ref{Thm: CCT} over structured grids whose weight function $w$ ends up having a much simpler form. Here are two key examples.

\smallskip
(i) Let $A\subseteq F^*$ be a finite multiplicative subgroup. Then the characteristic polynomial of $A$ is of the form $\Pi_A=X^{|A|}-c$, hence
\begin{align*}
\Pi'_A(a)=|A|\: a^{|A|-1}=|A|\: a^{-1}
\end{align*}
for each $a\in A$. It follows that, for a grid $A_1\times\dots \times A_n$ defined by finite multiplicative subgroups $A_1,\dots,A_n \subseteq F^*$, we have
\begin{align*}
w_a=\frac{1}{\Pi'_{A_1}(a_1)\dots \Pi'_{A_n}(a_n)}=\frac{a_1\dots a_n}{|A_1\times\dots \times A_n|}
\end{align*}
for each grid point $a=(a_1,\dots,a_n)$.

\smallskip
(ii) Let $A\subseteq F$ be a finite additive subgroup. Then 
\begin{align*}
\Pi'_A(a)= \prod_{b\in A, b\neq a} (a-b)=\prod_{b\in A\setminus\{0\}} b
\end{align*}
for each $a\in A$. Thus, for a grid $A_1\times\dots \times A_n$ defined by finite additive subgroups $A_1,\dots,A_n \subseteq F$, we have
\begin{align*}
w_a=\frac{1}{\Pi'_{A_1}(a_1)\dots \Pi'_{A_n}(a_n)}=\bigg(\prod_{b_1\in A_1\setminus\{0\}} b_1^{-1}\bigg)\dots \bigg(\prod_{b_n\in A_n\setminus\{0\}} b_n^{-1}\bigg)
\end{align*}
for each grid point $a=(a_1,\dots,a_n)$. The notable feature in this case is that the weights are constant over the grid.

\smallskip
We deduce the following high nullity instances of Theorem~\ref{Thm: CCT}.

\begin{cor}[Multiplicative grids] Let $A_1,\dots,A_n$ be finite multiplicative subgroups of $F$. Assume that $f\in F[X_1, \dots, X_n]$ is a polynomial whose degree satisfies
\begin{align*}
\deg(f)\leq (|A_1|-1)+\dots+(|A_n|-1)+\min\{ |A_1|,\dots,  |A_n|\}-1.
\end{align*}
Then the coefficient of the monomial $X_1^{|A_1|-1}\dots X_n^{|A_n|-1}$ in $f$ equals the averaged sum
\begin{align*}
\frac{1}{|A_1\times \dots \times A_n|}\sum_{(a_1,\dots,a_n)\in A_1\times \dots \times A_n}  a_1\dots a_n f(a_1,\dots,a_n).
\end{align*}
\end{cor}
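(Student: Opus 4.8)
The plan is to derive this corollary directly from Theorem~\ref{Thm: CCT}, whose hypotheses are met once we verify the relevant nullity. Each $A_i$ is a finite multiplicative subgroup of $F^*$, so by Example~\ref{ex: multnull} it is $\lambda_i$-null for $\lambda_i=|A_i|-1$. Setting $\lambda=\min\{|A_1|,\dots,|A_n|\}-1$, every $A_i$ is simultaneously $\lambda$-null, since being $\lambda'$-null for $\lambda'\geq\lambda$ implies being $\lambda$-null. Thus the degree hypothesis $\deg(f)\leq(|A_1|-1)+\dots+(|A_n|-1)+\lambda$ in the corollary is exactly the degree hypothesis of Theorem~\ref{Thm: CCT} with this choice of $\lambda$.

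With the hypotheses checked, Theorem~\ref{Thm: CCT} yields that the coefficient of $X_1^{|A_1|-1}\dots X_n^{|A_n|-1}$ in $f$ equals $\sum_{a\in A_1\times\dots\times A_n} w_a\,f(a)$. It then remains only to substitute the explicit form of $w_a$ computed in item~(i) above. Since each $A_i$ is a multiplicative subgroup, $\Pi_{A_i}(X)=X^{|A_i|}-1$, so $\Pi'_{A_i}(a_i)=|A_i|\,a_i^{|A_i|-1}=|A_i|\,a_i^{-1}$ for $a_i\in A_i$, using $a_i^{|A_i|}=1$. Multiplying over $i$ gives $w_a=\prod_i a_i/|A_i| = (a_1\dots a_n)/|A_1\times\dots\times A_n|$. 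Plugging this into the sum produces the claimed averaged formula.

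There is essentially no obstacle here: the corollary is a formal specialization of Theorem~\ref{Thm: CCT}. The only mild point requiring attention is the invertibility of $|A_i|$ in $F$, which is needed to write $w_a$ as displayed; but this is automatic, since $\Pi'_{A_i}(a_i)\neq 0$ (as $\Pi_{A_i}$ is separable), and $\Pi'_{A_i}(a_i)=|A_i|\,a_i^{-1}$ then forces $|A_i|\neq 0$ in $F$. I would write the proof in three short sentences: invoke Example~\ref{ex: multnull} for the nullity, apply Theorem~\ref{Thm: CCT}, and substitute the weight computation from item~(i).
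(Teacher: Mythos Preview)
Your proposal is correct and follows exactly the approach the paper intends: the corollary is stated as a ``high nullity instance of Theorem~\ref{Thm: CCT}'', obtained by combining the $(|A_i|-1)$-nullity from Example~\ref{ex: multnull} with the weight computation in item~(i) preceding the corollary. Your remark on the invertibility of $|A_i|$ in $F$ is a nice touch, making explicit what the paper leaves implicit.
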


\begin{cor}[Additive grids]\label{cor: mean0add} Let $F$ have characteristic $p$, and let $A_1,\dots,A_n$ be finite additive subgroups of $F$. Assume that $f\in F[X_1, \dots, X_n]$ is a polynomial whose degree satisfies
\begin{align*}
\deg(f)\leq (|A_1|-1)+\dots+(|A_n|-1)+(1-p^{-1})\min\{ |A_1|,\dots,  |A_n|\}-1.
\end{align*}
If the coefficient of the monomial $X_1^{|A_1|-1}\dots X_n^{|A_n|-1}$ in $f$ is zero, then
\begin{align*}
\sum_{a\in A_1\times \dots \times A_n}  f(a)=0.
\end{align*}
\end{cor}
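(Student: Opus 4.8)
The plan is to assemble Theorem~\ref{Thm: CCT} with the two facts about additive subgroups already recorded above. First, since each $A_i$ is a finite additive subgroup (and, discarding the degenerate singleton case as usual, not equal to $\{0\}$), Example~\ref{ex: ore0} tells us that $A_i$ is $\lambda_i$-null for $\lambda_i=(1-p^{-1})|A_i|-1$. Because being $\lambda$-null is a weaker requirement for smaller $\lambda$, each $A_i$ is in particular $\lambda$-null for $\lambda:=(1-p^{-1})\min\{|A_1|,\dots,|A_n|\}-1$; note that the minimum of $(1-p^{-1})|A_i|$ is attained at the smallest $|A_i|$, since $1-p^{-1}>0$. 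Thus $A_1,\dots,A_n$ are jointly $\lambda$-null, and the degree hypothesis on $f$ is precisely $\deg(f)\leq (|A_1|-1)+\dots+(|A_n|-1)+\lambda$.

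Next I would invoke Theorem~\ref{Thm: CCT}: under these hypotheses the coefficient of the monomial $X_1^{|A_1|-1}\dots X_n^{|A_n|-1}$ in $f$ equals $\sum_{a\in A_1\times\dots\times A_n} w_a\, f(a)$. The crucial point is that for a grid built from additive subgroups the weight function is constant, as computed in~(ii) above: for each $a\in A_i$ one has $\Pi'_{A_i}(a)=\prod_{b\in A_i\setminus\{0\}}b$ independently of $a$, whence $w_a$ equals the single nonzero scalar $w:=\prod_{i=1}^n\big(\prod_{b\in A_i\setminus\{0\}}b\big)^{-1}$ at every grid point.

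Finally, pulling the constant $w$ out of the sum, Theorem~\ref{Thm: CCT} yields that the coefficient of $X_1^{|A_1|-1}\dots X_n^{|A_n|-1}$ in $f$ equals $w\sum_{a\in A_1\times\dots\times A_n}f(a)$. By hypothesis this coefficient is zero, and since $w\neq 0$ we may cancel it to conclude $\sum_{a\in A_1\times\dots\times A_n}f(a)=0$.

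I do not expect a genuine obstacle here: all the substance lives in Theorem~\ref{Thm: CCT} and in the nullity computation of Example~\ref{ex: ore0}, and the corollary is their immediate specialization once one observes that the additive-grid weights collapse to a single nonzero constant. The only things to keep an eye on are that the nullity level furnished by Example~\ref{ex: ore0} lines up exactly with the degree budget in the statement, and that $w$ is genuinely nonzero---which it is, being a product of differences of distinct field elements.
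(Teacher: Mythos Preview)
Your proposal is correct and matches the paper's own route exactly: specialize Theorem~\ref{Thm: CCT} using the nullity level $\lambda=(1-p^{-1})\min\{|A_i|\}-1$ from Example~\ref{ex: ore0}, invoke the computation in~(ii) that the additive-grid weights $w_a$ are a single nonzero constant, and cancel. There is nothing to add; your remarks about the singleton caveat and the nonvanishing of $w$ are appropriate and align with the paper's conventions.
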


We note that Corollary~\ref{cor: mean0add} strengthens Corollary~\ref{cor: War2add}. We also point out that the above corollaries can be easily modified to allow for cosets. The resulting weighted sums would be slightly more involved; for the sake of simplicity, we just allude to such a generalized version instead of spelling it out.

Over finite fields, Corollary~\ref{cor: mean0add} yields results in the spirit of Warning's theorem. As a simple illustration, we discuss a variation on Example~\ref{ex: PP}.

\begin{ex}
Let $A_1,\dots,A_n$ be additive subgroups of a finite field $\F_q$ of characteristic $p$. The feature of the grid $A_1\times\dots \times A_n\subseteq \F_q^n$ that we are now interested in is 
\begin{itemize}
\item[(PP$_p$)] a plane in $\F_q^n$ intersects the grid in a number of points, which is a multiple of $p$.
\end{itemize}
Clearly, property (PP$_p$) is stronger than property (PP), considered in Example~\ref{ex: PP}; but the grid is also assumed to be more structured.

Consider again the polynomial $f_\pi\in \F_q[X_1,\dots,X_n]$ of degree $q-1$ associated to a plane $\pi\subseteq \F_q^n$. We wish to apply Corollary~\ref{cor: mean0add} to $f_\pi$. To ensure that the monomial $X_1^{|A_1|-1}\dots X_n^{|A_n|-1}$ does not appear in $f_\pi$, we simply require that $(|A_1|-1)+\dots+(|A_n|-1)\neq q-1$. As $f_\pi$ takes the value $1$ on $\pi$, respectively the value $0$ off $\pi$, we obtain
\begin{align*}
\sum_{a\in A_1\times \dots \times A_n}  f_\pi (a)=N_\pi\cdot 1
\end{align*}
where $N_\pi$ denotes the size of the intersection of $\pi$ with the grid. We deduce that $N_\pi\equiv 0$ mod $p$ for each plane $\pi$, meaning that (PP$_p$) holds, provided that
\[(|A_1|-1)+\dots+(|A_n|-1)\neq q-1,\]
and
\[q-1< (|A_1|-1)+\dots+(|A_n|-1)+(1-p^{-1})\min\{ |A_1|,\dots,  |A_n|\}.\]
\end{ex}

The coefficient theorem~\ref{Thm: CT} is designed to determine the coefficient of a top-degree monomial. The complete coefficient theorem~\ref{Thm: CCT} reaches into the lower-degree monomials of a polynomial. In fact, \emph{all} coefficients could be uncovered by an evaluation over a suitable structured grid. The `complete' designation of Theorem~\ref{Thm: CCT} owes largely to this fact, but also to the crucial use of complete symmetric polynomials in its proof.

We illustrate this `complete' viewpoint by the following multivariable interpolation theorem.

\begin{thm}\label{thm: MIT}  Let $A_1,\dots,A_n$ be $\lambda$-null finite subsets of a field $F$, where $|A_i|>1$ for each $i$. If $f\in F[X_1, \dots, X_n]$ is a polynomial of degree at most $\lambda$, then 
\begin{align*}
f=\sum_{k_1+\dots+k_n\leq \lambda} \bigg(\sum_{a\in A_1\times \dots \times A_n} a_1^{|A_1|-k_1-1}\dots a_n^{|A_n|-k_n-1}\: w_a \:f(a)\bigg) X_1^{k_1}\dots X_n^{k_n}.
\end{align*}
\end{thm}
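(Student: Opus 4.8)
The statement asserts that a polynomial $f$ of degree at most $\lambda$ is recovered from its values over the grid via an explicit formula. The natural strategy is to verify the claimed identity coefficient by coefficient: fix a monomial $X_1^{k_1}\dots X_n^{k_n}$ and check that the coefficient of that monomial on the right-hand side agrees with $f_{k_1,\dots,k_n}$, the coefficient on the left. Since $\deg(f)\le\lambda$, the left-hand coefficient is zero unless $k_1+\dots+k_n\le\lambda$, which is exactly the range over which the right-hand sum runs, so there is no mismatch of index ranges to worry about.

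\textbf{Key steps.} First I would observe that, for a fixed tuple $(k_1,\dots,k_n)$ with $k_1+\dots+k_n\le\lambda$, the inner weighted sum
\[
\sum_{a\in A_1\times\dots\times A_n} a_1^{|A_1|-k_1-1}\dots a_n^{|A_n|-k_n-1}\: w_a\: f(a)
\]
is precisely the quantity that Theorem~\ref{Thm: CCT} evaluates, applied not to $f$ but to the degree-raised polynomial
\[
g_{k_1,\dots,k_n}=X_1^{|A_1|-k_1-1}\dots X_n^{|A_n|-k_n-1}\: f(X_1,\dots,X_n).
\]
Indeed, $w_a\, g_{k_1,\dots,k_n}(a)=a_1^{|A_1|-k_1-1}\dots a_n^{|A_n|-k_n-1}w_a f(a)$. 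To invoke Theorem~\ref{Thm: CCT} for $g_{k_1,\dots,k_n}$ I need the degree bound $\deg(g_{k_1,\dots,k_n})\le (|A_1|-1)+\dots+(|A_n|-1)+\lambda$; since $\deg(f)\le\lambda$, the degree of $g_{k_1,\dots,k_n}$ is at most $\sum_i(|A_i|-k_i-1)+\lambda\le\sum_i(|A_i|-1)+\lambda$, using $k_i\ge 0$, so the hypothesis holds. Theorem~\ref{Thm: CCT} then tells me that the inner sum equals the coefficient of $X_1^{|A_1|-1}\dots X_n^{|A_n|-1}$ in $g_{k_1,\dots,k_n}$. But multiplication by the monomial $X_1^{|A_1|-k_1-1}\dots X_n^{|A_n|-k_n-1}$ simply shifts exponents, so this coefficient is exactly the coefficient of $X_1^{k_1}\dots X_n^{k_n}$ in $f$, i.e. $f_{k_1,\dots,k_n}$. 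Assembling these equalities over all tuples $(k_1,\dots,k_n)$ with $k_1+\dots+k_n\le\lambda$ reconstructs $f$, since those are all the monomials that occur in a polynomial of degree at most $\lambda$.

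\textbf{A technical point.} The one place to be slightly careful is the hypothesis $|A_i|>1$, which guarantees that the exponents $|A_i|-k_i-1$ appearing in the degree-raising monomial are non-negative whenever $k_i<|A_i|$; since $k_i\le k_1+\dots+k_n\le\lambda\le|A_i|$ — and in the relevant range actually $k_i<|A_i|$ because $k_i\le\lambda\le|A_i|-1$ when $A_i$ is not a singleton and $\lambda\le|A_i|$, with the boundary case $\lambda=|A_i|$ forcing $A_i=\{0\}$, excluded here — this is fine, but it is worth flagging explicitly so that $g_{k_1,\dots,k_n}$ is genuinely a polynomial. I would also note that the identity is vacuous-looking but content-bearing: it says the full Taylor data of a low-degree $f$ is encoded in grid values, which is the `complete' interpolation phenomenon advertised before the statement.

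\textbf{Main obstacle.} There is no real obstacle here — the proof is essentially a bookkeeping reduction to Theorem~\ref{Thm: CCT} via the degree-raising trick already discussed in the paper. The only thing demanding a moment's attention is confirming the degree bound for each $g_{k_1,\dots,k_n}$ and the non-negativity of the shift exponents, both of which follow from $\deg(f)\le\lambda$ and $|A_i|>1$; once those are in place, the coefficient extraction is immediate.
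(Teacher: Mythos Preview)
Your proposal is correct and follows essentially the same route as the paper: fix a tuple $(k_1,\dots,k_n)$, apply the degree-raising trick to form $\tilde f=X_1^{|A_1|-k_1-1}\cdots X_n^{|A_n|-k_n-1}f$, verify the degree bound, and invoke Theorem~\ref{Thm: CCT} to read off the coefficient. Your handling of the technical point---that $|A_i|>1$ forces $\lambda\le |A_i|-1$ (since $\lambda=|A_i|$ would mean $A_i=\{0\}$) and hence the shift exponents are non-negative---is exactly the reason the paper imposes that hypothesis, and you make it more explicit than the paper does.
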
 

\begin{proof}
Let $(k_1,\dots,k_n)$ be a tuple of non-negative integers satisfying $k_1+\dots+k_n\leq \lambda$; in particular, $k_i\leq \lambda\leq |A_i|-1$ for each $i$. Let $C$ denote the coefficient of the monomial $X_1^{k_1}\dots X_n^{k_n}$ in $f$. We have to show that
\[C=\sum_{a\in A_1\times \dots \times A_n} a_1^{|A_1|-k_1-1}\dots a_n^{|A_n|-k_n-1}\: w_a \:f(a).\]
We employ the degree-raising trick. Put 
\[\tilde{f}=X_1^{|A_1|-k_1-1}\dots X_n^{|A_n|-k_n-1} f.\]
Then $C$ is the coefficient of the monomial $X_1^{|A_1|-1}\dots X_n^{|A_n|-1}$ in $\tilde{f}$, and
\begin{align*}
\deg(\tilde{f})&= (|A_1|-k_1-1)+\dots+(|A_n|-k_n-1)+\deg(f)\\
&\leq (|A_1|-1)+\dots+(|A_n|-1)+\lambda.
\end{align*}
Hence, by Theorem~\ref{Thm: CT}, we have that
\begin{align*}
C=\sum_{a\in A_1\times \dots \times A_n} w_a\: \tilde{f}(a).
\end{align*}
Upon reverting to the original polynomial $f$, we obtain the desired formula for $C$.
\end{proof}

\begin{cor}
Let $A_1,\dots,A_n$ be $\lambda$-null finite subsets of a field $F$, where $|A_i|>1$ for each $i$. Let $F\subseteq E$ be a field extension. Assume that $f\in E[X_1, \dots, X_n]$ is a polynomial of degree at most $\lambda$, with the property that the values of $f$ over the grid $A_1\times \dots \times A_n$ lie in $F$. Then $f$ has coefficients in $F$. 
\end{cor}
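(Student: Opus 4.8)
The plan is to deduce this immediately from the multivariable interpolation theorem, Theorem~\ref{thm: MIT}. The first step is to observe that the relevant hypotheses are insensitive to the ambient field: the subsets $A_1,\dots,A_n$ lie in $F$, hence also in $E$, and the condition of being $\lambda$-null refers only to the characteristic polynomial $\Pi_{A_i}\in F[X]\subseteq E[X]$, which is unchanged upon passing to $E$; likewise the condition $|A_i|>1$ is unaffected. Therefore Theorem~\ref{thm: MIT} applies verbatim to $f\in E[X_1,\dots,X_n]$ over the extension field $E$, with the same sets $A_1,\dots,A_n$.

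Next I would apply the theorem to express each coefficient of $f$ — say the coefficient of $X_1^{k_1}\dots X_n^{k_n}$ with $k_1+\dots+k_n\leq\lambda$ — as the explicit sum
\[\sum_{a\in A_1\times\dots\times A_n} a_1^{|A_1|-k_1-1}\dots a_n^{|A_n|-k_n-1}\,w_a\,f(a).\]
The remaining task is to check that every term on the right-hand side lies in $F$. The grid coordinates $a_i$ lie in $A_i\subseteq F$; the weight $w_a$ is a product of reciprocals of $\Pi'_{A_i}(a_i)=\prod_{b\in A_i,\, b\neq a_i}(a_i-b)$, which are non-zero elements of $F$, so $w_a\in F$; and $f(a)\in F$ by hypothesis. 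Hence each coefficient is a finite sum of products of elements of $F$, so it lies in $F$, which is precisely the assertion.

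I do not anticipate a genuine obstacle here; the only point requiring a word of care is the first one — legitimately invoking the interpolation formula over $E$ rather than over $F$ — and this is settled by the remark that $\lambda$-nullity is an intrinsic property of the polynomial $\Pi_{A_i}$, independent of the field in which $A_i$ is regarded as sitting.
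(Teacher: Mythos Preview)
Your proposal is correct and is exactly the intended argument: the paper places this corollary immediately after Theorem~\ref{thm: MIT} without a separate proof, signaling that it follows at once from the interpolation formula together with the observation that the grid points, the weights $w_a$, and the values $f(a)$ all lie in $F$. Your remark that $\lambda$-nullity is insensitive to the ambient field is the only point that needs saying, and you have said it.
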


\section*{Appendix: Sylvester's identity}
This author vaguely recalls having had to check, at some point in middle school, the following identities:
\begin{align*}
\frac{1}{(a-b)(a-c)}+\frac{1}{(b-a)(b-c)}+\frac{1}{(c-a)(c-b)}&=0,\\
\frac{a}{(a-b)(a-c)}+\frac{b}{(b-a)(b-c)}+\frac{c}{(c-a)(c-b)}&=0,\\
\frac{a^2}{(a-b)(a-c)}+\frac{b^2}{(b-a)(b-c)}+\frac{c^2}{(c-a)(c-b)}&=1,
\end{align*}
valid for any three distinct numbers $a, b,c$. Checking similar identities for four distinct numbers $a,b,c,d$ may have happened, but if it did, then it must be a repressed memory. The general result reads as follows.

\begin{thm}[Euler's identities]\label{thm: eul}
Let $a_1,\dots,a_m$ be distinct elements of a field $F$. Then
\begin{align}
\sum_{k=1}^m \frac{a^{d}_k}{\prod_{j: \: j\neq k\:} (a_k-a_j)}=\begin{cases}
0 & \textrm{ if } d=0,\dots,m-2, \\
1 & \textrm{ if } d=m-1.
\end{cases}\label{eq: eul}
\end{align}
\end{thm}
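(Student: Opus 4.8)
The plan is to prove Euler's identities by a partial-fractions argument applied to a single rational function, which simultaneously handles all the cases $d=0,\dots,m-1$. Consider the polynomial $P(X)=\prod_{j=1}^m (X-a_j)$, which has $P'(a_k)=\prod_{j\neq k}(a_k-a_j)$, so the quantity we must evaluate is $\sum_{k=1}^m a_k^{d}/P'(a_k)$. First I would recall the classical partial-fraction decomposition: for any polynomial $Q(X)$ of degree at most $m-1$, one has
\begin{align*}
\frac{Q(X)}{P(X)}=\sum_{k=1}^m \frac{Q(a_k)}{P'(a_k)}\cdot\frac{1}{X-a_k}.
\end{align*}
Taking $Q(X)=X^{d}$ for $0\le d\le m-1$, the left side is a proper rational function, and multiplying both sides by $X$ and letting $X\to\infty$ (equivalently, comparing the coefficient of $X^{-1}$ in the Laurent expansion at infinity of each side) extracts exactly $\sum_{k=1}^m a_k^{d}/P'(a_k)$ on the right. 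On the left, $X\cdot X^{d}/P(X)=X^{d+1}/P(X)$ tends to $0$ when $d+1<m$ and to $1$ when $d+1=m$, since $P$ is monic of degree $m$. This yields the stated dichotomy at once.

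To keep the argument purely algebraic (valid over any field, without limits), I would instead phrase the last step as a coefficient comparison: write $X^{d+1} = P(X)\cdot S(X) + R(X)$ with $\deg R < m$; for $d+1 \le m-1$ we get $S=0$ and $R=X^{d+1}$, while for $d+1=m$ we get $S=1$. Then the identity $\sum_k a_k^{d}/P'(a_k) = $ (leading behaviour) can be derived by multiplying the partial-fraction identity through by $P(X)$ to obtain the polynomial identity $X^{d}=\sum_k (a_k^{d}/P'(a_k))\,P(X)/(X-a_k)$, and comparing the coefficients of $X^{m-1}$ on both sides: each $P(X)/(X-a_k)$ is monic of degree $m-1$, so the coefficient of $X^{m-1}$ on the right is $\sum_k a_k^{d}/P'(a_k)$, while on the left it is $0$ for $d\le m-2$ and $1$ for $d=m-1$. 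This is clean and characteristic-free.

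The remaining point is to justify the partial-fraction decomposition itself over an arbitrary field; I would do this directly by observing that $Q(X) - \sum_k (Q(a_k)/P'(a_k))\,P(X)/(X-a_k)$ is a polynomial of degree at most $m-1$ (for $\deg Q \le m-1$) that vanishes at each $a_k$ (a short check: at $X=a_k$ only the $k$-th term of the sum survives, and it equals $Q(a_k)$), hence vanishes identically since it has $m$ roots but degree $<m$. The main obstacle is essentially bookkeeping: making sure the degree count in the coefficient comparison is airtight and that the $a_k$ being distinct is used precisely where needed (namely to guarantee $P'(a_k)\neq 0$ and that the Lagrange-type interpolation has the claimed degree). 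Once Euler's identities are in hand, Sylvester's identity (Lemma~\ref{lem: Syl}) follows: for $d\ge |A|-1$ one replaces $X^{d}$ by $X^{d}$ modulo $P$ and recognizes the resulting top coefficient of $X^{d}$ reduced mod $P$ as the complete homogeneous symmetric function $h_{d-m+1}$ of the roots, using the generating-function identity $1/P(X) = X^{-m}\sum_{r\ge 0} h_r(A)\,X^{-r}$.
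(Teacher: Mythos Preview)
Your proof of Euler's identities is correct and clean: you recognize the sum as the coefficient of $X^{m-1}$ in the Lagrange interpolant of $X^d$ at the nodes $a_1,\dots,a_m$, and read off the answer. The justification of the interpolation identity (difference has degree $<m$ and $m$ roots) is airtight and characteristic-free.

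The paper takes a different route. It does not prove Euler's identities in isolation; instead it states and proves the more general Sylvester identity (Theorem~\ref{thm: syl}), from which Euler's identities drop out as the low-degree cases. The paper's proof of Sylvester is recursive: starting from the partial-fraction decomposition of $1/\prod_{j<m}(X-a_j)$ evaluated at $X=a_m$, it derives a recurrence
\[
S_d(a_1,\dots,a_m)=\sum_{e=0}^{d-1} S_e(a_1,\dots,a_{m-1})\,a_m^{\,d-e-1},
\]
observes that the complete homogeneous sums $H_d$ satisfy the same recurrence, and checks the boundary cases $d=0$ and $m=2$. Your approach is the classical Lagrange-interpolation argument and is certainly the shortest path to Euler's identities alone; the paper's approach is less direct for Euler but handles all degrees $d\ge 0$ uniformly and yields Sylvester in one stroke. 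Your closing sketch for the $d\ge m$ range, via the formal Laurent expansion $1/P(X)=X^{-m}\sum_{r\ge 0} h_r(A)X^{-r}$, is also a valid (and standard) route to Sylvester, distinct from the paper's recurrence.
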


These identities first appeared in a letter of Leonhard Euler to Christian Goldbach, dated September 25th 1762 \cite[p.1123-1124]{E}. Euler goes on to say that these identities appear ``to be more than a little curious; however, it seems to me that you may have been so kind as to communicate something similar to me a long time ago.'' In a subsequent letter, dated November 9th 1762, Euler proves his identities by using a partial fraction decomposition.

But the much more general result is the following.

\begin{thm}[Sylvester's identity]\label{thm: syl}
Let $a_1,\dots,a_m$ be distinct elements of a field $F$. Then, for any non-negative integer $d$, we have
\begin{align}\label{eq: syl}
\sum_{k=1}^m \frac{a^{d}_k}{\prod_{j: \: j\neq k\:} (a_k-a_j)}=\sum_{\substack{i_1+\dots+i_m=d-m+1\\ i_1,\dots,i_m\geq 0}} a_1^{i_1}\dots a_m^{i_m}.
\end{align} 
\end{thm}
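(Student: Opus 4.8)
The plan is to prove Sylvester's identity by a generating-function (partial-fraction) argument, treating the right-hand side of \eqref{eq: syl} as a complete homogeneous symmetric polynomial $h_{d-m+1}(a_1,\dots,a_m)$, with the convention that $h_r=0$ for $r<0$; in that language the claim is exactly Lemma~\ref{lem: Syl} restated for a set $A=\{a_1,\dots,a_m\}$, so it suffices to establish the two cases $0\le d<m-1$ (both sides zero) and $d\ge m-1$ (left side equals $h_{d-m+1}(A)$).

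First I would set up the partial fraction decomposition of the rational function $X^{d}/\Pi_A(X)$, where $\Pi_A(X)=\prod_{k}(X-a_k)$. When $d<m$ the numerator has smaller degree than the denominator, so $X^{d}/\Pi_A(X)=\sum_{k} c_k/(X-a_k)$ with $c_k=a_k^{d}/\Pi'_A(a_k)=a_k^{d}/\prod_{j\ne k}(a_k-a_j)$, which is precisely the $k$-th summand on the left of \eqref{eq: syl}. Comparing the behaviour as $X\to\infty$: the left side decays like $X^{d-m}$, while the right side is $\big(\sum_k c_k\big)X^{-1}+O(X^{-2})$. Hence $\sum_k c_k=0$ whenever $d-m\le -2$, i.e. $d\le m-2$; and when $d=m-1$ the leading term gives $\sum_k c_k=1$. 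This already yields Euler's identities (Theorem~\ref{thm: eul}) and the case $0\le d<m-1$ of Sylvester's identity.

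For general $d$, I would instead expand the geometric-type series. Write $1/\Pi_A(X)=X^{-m}\prod_k(1-a_k/X)^{-1}=X^{-m}\sum_{r\ge 0} h_r(a_1,\dots,a_m)\,X^{-r}$, the standard generating function for complete homogeneous symmetric polynomials, valid as a formal Laurent series in $X^{-1}$. Multiplying by $X^{d}$ gives $X^{d}/\Pi_A(X)=\sum_{r\ge0} h_r(A)\,X^{d-m-r}$. On the other hand, performing the partial fraction expansion about each pole $a_k$ and then expanding each $1/(X-a_k)=\sum_{t\ge 0} a_k^{t}X^{-t-1}$ shows that the coefficient of $X^{-1}$ in $\sum_k a_k^{d}/\big(\Pi'_A(a_k)(X-a_k)\big)$ equals $\sum_k a_k^{d}/\Pi'_A(a_k)$ — but one must be careful when $d\ge m$, since then $X^{d}/\Pi_A(X)$ also has a polynomial part. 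The clean way around this is to note that the polynomial part contributes only to powers $X^{j}$ with $j\ge 0$, so it does not affect the coefficient of $X^{-1}$; matching the coefficient of $X^{-1}$ on both sides of $X^{d}/\Pi_A(X)=(\text{polynomial})+\sum_k \tfrac{a_k^{d}}{\Pi'_A(a_k)}\cdot\tfrac{1}{X-a_k}$ against $\sum_{r\ge0} h_r(A)X^{d-m-r}$ forces $d-m-r=-1$, i.e. $r=d-m+1$, and hence $\sum_k a_k^{d}/\Pi'_A(a_k)=h_{d-m+1}(A)$, which is \eqref{eq: syl} since $h_{d-m+1}(A)=\sum_{i_1+\dots+i_m=d-m+1} a_1^{i_1}\cdots a_m^{i_m}$.

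The main obstacle is purely bookkeeping: making the ``coefficient of $X^{-1}$'' argument rigorous as an identity of formal Laurent series rather than an analytic statement, and handling the polynomial part of $X^{d}/\Pi_A(X)$ when $d\ge m$ without it contaminating the computation. One safe route is to first prove the identity for $d\le m-1$ as above, and then induct on $d$: the relation $X^{d}=X\cdot X^{d-1}$ translates, after dividing by $\Pi_A(X)$ and extracting residues, into $\sum_k a_k^{d}/\Pi'_A(a_k)=\sum_k a_k^{d-1}/\Pi'_A(a_k)\cdot a_k$, which on the symmetric-function side is exactly the Pieri-type recursion $h_{d-m+1}(A)=\sum_k$ (contribution) — more directly, one can verify $h_{r}(A)$ satisfies the same recursion by tracking how multiplication by $X$ shifts the Laurent expansion. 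Either way the induction step is a short formal manipulation, and the base case is Euler's identity. Since the Appendix is explicitly billed as ``leisurely and self-contained,'' I would write out the partial-fraction computation in full for $d<m$ and then present the Laurent-series extraction for the general case, flagging the formal-series interpretation explicitly.
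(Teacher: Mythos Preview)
Your argument is correct but takes a genuinely different route from the paper's. You compute the coefficient of $X^{-1}$ in the formal Laurent expansion (in $F((X^{-1}))$) of $X^{d}/\Pi_A(X)$ in two ways: once via the partial-fraction decomposition $X^{d}/\Pi_A(X)=Q(X)+\sum_k \big(a_k^{d}/\Pi'_A(a_k)\big)(X-a_k)^{-1}$, which yields the left side of \eqref{eq: syl}, and once via the generating function $\prod_k(1-a_k/X)^{-1}=\sum_{r\ge 0} h_r(A)X^{-r}$, which yields $h_{d-m+1}(A)$. This is essentially the generating-function proof the paper attributes to Strehl--Wilf and Chen--Louck. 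The paper instead presents what it calls a ``seemingly new'' argument by induction on $m$: it takes the partial-fraction decomposition of $1/\prod_{j<m}(X-a_j)$, evaluates at $X=a_m$, and after dividing $a_k^{d}-a_m^{d}$ by $a_k-a_m$ obtains the recurrence $S_d(a_1,\dots,a_m)=\sum_{e=0}^{d-1} S_e(a_1,\dots,a_{m-1})\,a_m^{d-e-1}$; it then checks that $H_d(a_1,\dots,a_m):=h_{d-m+1}(a_1,\dots,a_m)$ satisfies the same recurrence with matching boundary cases $d=0$ and $m=2$. Your route is quicker and makes the appearance of $h_r$ immediate from its defining generating function; the paper's route avoids formal Laurent series altogether and is deliberately pitched as ``an argument that Euler himself could have given.'' One small caveat: the inductive alternative you sketch at the end (induction on $d$) does not work as written---the displayed relation $\sum_k a_k^{d}/\Pi'_A(a_k)=\sum_k a_k\cdot a_k^{d-1}/\Pi'_A(a_k)$ is a tautology rather than a usable recursion---but your main Laurent-series argument stands on its own and does not rely on it.
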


One reason why the identity \eqref{eq: syl} is a beautiful and unexpected formula, is that the left-hand side is a rational function in $a_1,\dots,a_m$ while the right-hand side is a polynomial function in $a_1,\dots,a_m$. 

Note how Euler's identities \eqref{eq: eul} are accounted for by Sylvester's identity in low degrees: when $d<m-1$, the right-hand side of \eqref{eq: syl} is an empty sum; when $d=m-1$, the right-hand side of \eqref{eq: syl} has a single term, equal to $1$. Consider now high degrees. When $d\geq m$, the right-hand side of \eqref{eq: syl} can be expressed concisely, by means of the complete symmetric polynomials, as $h_{d-m+1}(a_1,\dots,a_m)$. Setting $A=\{a_1,\dots,a_m\}$ and $\Pi_A(X)=(X-a_1)\dots(X-a_m)$, we see that we can rewrite Sylvester's identity in the concise form of Lemma~\ref{lem: Syl}. We also note that Lemma~\ref{lem: Syl} has a minor technical advantage: it is valid for $m=1$ as well, whereas Theorem~\ref{thm: syl} implicitly assumes that $m\geq 2$.

The attribution of the identity \eqref{eq: syl} to James Joseph Sylvester is explained by Gaurav Bhatnagar in \cite{Bhat}. It seems that the identity \eqref{eq: syl}, and the Sylvester attribution, are not quite absorbed in the mathematical canon. On the one hand, see Knuth \cite[p.472-473, Notes to Exer.33]{Knu}. On the other hand, Sylvester's identity was rediscovered by James Louck, cf. \cite[Appendix A]{LB} and \cite[Thm.2.2]{CL}, and, more recently, by Volker Stehl and Herbert Wilf \cite[Sec.4]{SW}. As a further illustration of this point, that Sylvester's identity is relatively unknown, consider the following result from a very recent and very interesting work of Stephan Ramon Garcia, Mohamed Omar, Christopher O'Neill, and Samuel Yih \cite[Thm.3]{G++}: for distinct $a_1\dots,a_m\in \C$, and $z\in \C$, it holds that
\begin{align*}
\sum_{p=0}^\infty \frac{h_p(a_1\dots,a_m)}{(p+m-1)!}\: z^{p+m-1}=\sum_{k=1}^m \frac{e^{a_k z}}{\prod_{j: \: j\neq k\:} (a_k-a_j)}.
\end{align*} 
This is a key identity for the purposes of \cite{G++}. The proof given therein is quite lengthy, occupying well over three pages. The following three-line argument shows that the above identity is, in fact, a restatement of \eqref{eq: syl} as a power series identity:
\begin{align*}
\sum_{k=1}^m \frac{e^{a_k z}}{\prod_{j: \: j\neq k\:} (a_k-a_j)}&=\sum_{k=1}^m \frac{1}{\prod_{j: \: j\neq k\:} (a_k-a_j)} \bigg(\sum_{d=0}^\infty \frac{(a_kz)^d}{d!}\bigg)\\
&=\sum_{d=0}^\infty \bigg(\sum_{k=1}^m \frac{a_k^d}{\prod_{j: \: j\neq k\:} (a_k-a_j)}\bigg)\: \frac{z^d}{d!}\\
&=\sum_{d=m-1}^\infty h_{d-m+1}(a_1\dots,a_m)\: \frac{z^d}{d!}.
\end{align*} 

Both Stehl - Wilf \cite{SW} and Chen - Louck \cite{CL} prove Theorem~\ref{thm: syl} by means of generating functions. Another proof can be found in \cite[Sec.2]{Bhat}. Here, we include a seemingly new proof of Theorem~\ref{thm: syl}. This is an argument that Euler himself could have given. 

\begin{proof}
We start, very much like Euler did for his proof of Theorem~\ref{thm: eul}, with a partial fraction decomposition:
\begin{align}\label{eq: pfd}
\frac{1}{(X-a_1)\dots(X-a_{m-1})}=\sum_{k=1}^{m-1}\Bigg( \prod_{\substack{j=1\\j\neq k}}^{m-1}\frac{1}{a_k-a_j}\Bigg) \frac{1}{X-a_k}.
\end{align}
Evaluating at the remaining node, $X=a_m$, and multiplying through by $a^d_m$ yields
\begin{align*}
\Bigg( \prod_{\substack{j=1\\j\neq m}}^{m}\frac{1}{a_m-a_j}\Bigg)a_m^d=-\sum_{k=1}^{m-1}\Bigg( \prod_{\substack{j=1\\j\neq k}}^{m}\frac{1}{a_k-a_j}\Bigg) a_m^d
\end{align*}
and so, adding terms on both sides, 
\begin{align*}
\sum_{k=1}^{m}\Bigg( \prod_{\substack{j=1\\j\neq k}}^{m}\frac{1}{a_k-a_j}\Bigg) a_k^d=\sum_{k=1}^{m-1}\Bigg( \prod_{\substack{j=1\\j\neq k}}^{m}\frac{1}{a_k-a_j}\Bigg) (a_k^d-a_m^d).
\end{align*}
The left-hand side of the above identity is the left-hand side of \eqref{eq: syl}, which we denote by $S_d(a_1,\dots,a_m)$ in what follows. The right-hand side of the above formula can be rewritten, after dividing $a_k^d-a_m^d$ by $a_k-a_m$, as 
\begin{align*}
\sum_{k=1}^{m-1}\Bigg( \prod_{\substack{j=1\\j\neq k}}^{m-1}\frac{1}{a_k-a_j}\Bigg) \sum_{e=0}^{d-1} a_k^e a_m^{d-e-1}=\sum_{e=0}^{d-1}a_m^{d-e-1}\sum_{k=1}^{m-1}\Bigg( \prod_{\substack{j=1\\j\neq k}}^{m-1}\frac{1}{a_k-a_j}\Bigg)  a_k^e. 
\end{align*}
We recognize the inner sum as $S_e(a_1,\dots,a_{m-1})$. All in all, we get the recurrence 
\begin{align*}
S_d(a_1,\dots, a_{m})=\sum_{e=0}^{d-1} S_e(a_1,\dots,a_{m-1}) \: a_{m}^{d-e-1}.
\end{align*}
Now let us turn to the right-hand side of \eqref{eq: syl}. We denote by $H_d(a_1,\dots,a_m)$, and we observe that it satisfies the same recurrence relation. Indeed:
\begin{align*}
H_d(a_1,\dots,a_m)&=\sum_{\substack{i_1+\dots+i_m=d-m+1}} a_1^{i_1}\dots a_m^{i_m}\\
&=\sum_{e=m-2}^{d-1} \Bigg(\sum_{i_1+\dots+i_{m-1}=e-m+2} a_1^{i_1}\dots a_{m-1}^{i_{m-1}}\Bigg) a_{m}^{d-e-1} \\
&=\sum_{e=0}^{d-1} H_e(a_1,\dots,a_{m-1}) \: a_{m}^{d-e-1}.
\end{align*}
Throughout, the exponents $i_1,\dots,i_m$ are non-negative. In the last step, we have extended the range of $e$, but note that each one of the corresponding terms vanishes. 

The desired identity $S_d(a_1,\dots,a_m)=H_d(a_1,\dots,a_m)$ will follow as soon the boundary cases $d=0$, respectively $m=2$, are checked. Let us do that. When $d=0$, we actually need to check that $S_0(a_1,\dots,a_m)=0$; this fact already appeared on the way, for it is the evaluation of \eqref{eq: pfd} at $X=a_m$. When $m=2$, we need to check that
\[\frac{a^{d}_1}{a_1-a_2}+\frac{a^{d}_2}{a_2-a_1}=\sum_{\substack{i_1+i_2=d-1\\ i_1,i_2\geq 0}} a_1^{i_1} a_2^{i_2}.\]
This is obvious.
\end{proof}

\bigskip

\end{document}